\newtheorem*{ThmA}{Theorem~A}
\newtheorem{Thm}{Theorem}[section]
\newaliascnt{Lem}{Thm}
\newtheorem{Lem}[Lem]{Lemma}
\newaliascnt{Prop}{Thm}
\newtheorem{Prop}[Prop]{Proposition}
\newaliascnt{Cor}{Thm}
\newaliascnt{Con}{Thm}
\theoremstyle{definition}
\newaliascnt{Def}{Thm}
\newtheorem{Def}[Def]{Definition}
\newtheorem{Notation}{Notation}
\numberwithin{equation}{section}
\let\ph=\phi
\renewcommand{\phi}{\varphi}
\newcommand{\C}{\operatorname{C}}
\newcommand{\N}{\operatorname{N}}
\newcommand{\Z}{\operatorname{Z}}
\newcommand{\cH}{\mathcal{H}}
\newcommand{\Aut}{\operatorname{Aut}}
\newcommand{\pcore}{\operatorname{O}}
\newcommand{\Irr}{\operatorname{Irr}}
\newcommand{\F}{\operatorname{F}}
\newcommand{\wt}[1]{\widetilde{#1}}
\newcommand{\bg}[1]{\textbf{#1}}
\newcommand{\tw}[1]{{}^#1\!}
\newcommand{\wh}[1]{\widehat{#1}}
\def\irr#1{{\Irr}(#1)}
\def\irrp#1{{\Irr}_{p'}(#1)}
\mathchardef\ordinarycolon\mathcode`\:  %defines a nice ":=" 
\title{Groups with few $p'$-character degrees\\ in the principal block}
\author{Eugenio Giannelli\footnote{Dipartimento di Matematica e Informatica, U. Dini, Viale Morgagni 67/a, Firenze, Italy,  \href{mailto:eugenio.giannelli@unifi.it}{eugenio.giannelli@unifi.it}}, Noelia Rizo\footnote{Dipartimento di Matematica e Informatica, U. Dini, Viale Morgagni 67/a, Firenze, Italy,  \href{mailto:eugenio.giannelli@unifi.it}{noelia.rizocarrion@unifi.it}},\\ Benjamin Sambale\footnote{Institut für Algebra, Zahlentheorie und Diskrete Mathematik, Leibniz Universität Hannover, Germany, \href{mailto:benjamin.sambale@uni-jena.de}{sambale@math.uni-hannover.de}} \ and A. A. Schaeffer Fry\footnote{Department of Mathematical and Computer Sciences, Metropolitan State University of Denver, Denver, CO 80217, USA, \href{mailto:aschaef6@msudenver.edu}{aschaef6@msudenver.edu}}}
\date{\today}
\begin{document}
\frenchspacing
\maketitle
\begin{abstract}\noindent
Let $p\ge 5$ be a prime and let $G$ be a finite group.
We prove that $G$ is $p$-solvable of $p$-length at most $2$ if there are at most two distinct $p'$-character degrees in the principal $p$-block of $G$. This generalizes a theorem of Isaacs--Smith as well as a recent result of three of the present authors.
\end{abstract}

\textbf{Keywords:} $p'$-character degrees; principal block\\
\textbf{AMS classification:} 20C15, 20C30, 20C33

\section{Introduction}

Let $G$ be a finite group. If all non-linear irreducible characters of $G$ have degree divisible by a prime $p$, then $G$ has a normal $p$-complement by a theorem of Thompson~\cite[Theorem~1]{Thompson} (see also \cite[Corollary~12.2]{Isaacs}). 
Moreover, Berkovich~\cite[Proposition~9 and the subsequent remark]{Berkovichsolv} has shown that $G$ is solvable in this situation.
This result was extended in Kazarin--Berkovich~\cite{KazarinBerkovich} to the case where $G$ has at most one non-linear character of $p'$-degree.
In a recent paper~\cite{GRS}, three of the present authors proved more generally that $G$ is solvable of $p$-length at most $2$ whenever $p\ge 5$ and $|\{\chi(1):\chi\in\Irr_{p'}(G)\}|\le 2$ where $\Irr_{p'}(G)$ is the set of irreducible characters of $G$ of $p'$-degree. This has solved Problem~1 in \cite[p. 588]{KazarinBerkovich} and Problem~5.3 in \cite{NavarroVar}.

In the present paper we generalize our theorem to blocks. This is motivated by a result of Isaacs and Smith~\cite[Corollary~3]{IsaacsSmith} who showed that $G$ has a normal $p$-complement if and only if all non-linear characters in the principal $p$-block of $G$ have degree divisible by $p$. The following is our main theorem.

\begin{ThmA}
Let $B_0$ be the principal block of a finite group $G$ with respect to a prime $p\ge 5$. Suppose that $|\{\chi(1):\chi\in\Irr_{p'}(B_0)\}|\le 2$. Then $G/\pcore_{p'}(G)$ is solvable and $\pcore^{pp'pp'}(G)=1$. In particular, $G$ is $p$-solvable.
\end{ThmA}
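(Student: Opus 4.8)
The plan is to reduce to the already-known ordinary (non-block) result of \cite{GRS} via a two-stage argument: first show $G/\pcore_{p'}(G)$ is solvable, then establish the $p$-length bound $\pcore^{pp'pp'}(G)=1$. Throughout write $\bar{G}=G/\pcore_{p'}(G)$. Since the principal block and its irreducible characters of $p'$-degree are unchanged modulo $\pcore_{p'}(G)$ — indeed $\Irr(B_0(G))$ restricts to $\Irr(B_0(\bar G))$ under inflation, and $p'$-degrees are preserved — we may and do assume $\pcore_{p'}(G)=1$ from the outset, so that $\F(G)=\pcore_p(G)$ and we must prove $G$ is solvable with $\pcore^{pp'p}(G)=1$.

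The first main step is solvability. I would argue by minimal counterexample. If $G$ is a minimal counterexample, then by the Isaacs--Smith criterion \cite[Corollary~3]{IsaacsSmith} the hypothesis $|\{\chi(1):\chi\in\Irr_{p'}(B_0)\}|\ge 2$ actually holds with equality (the value $1$ is always attained by the trivial character, and if it is the only value then $G$ has a normal $p$-complement, hence is solvable by Berkovich, contradiction). So there is exactly one nontrivial $p'$-degree, say $d$, appearing in $B_0$. The crucial input here is a classification of the finite simple groups $S$ (or quasisimple, or almost simple) for which the principal $p$-block contains at most two distinct $p'$-character degrees; one expects this list to be extremely short for $p\ge 5$ — essentially forcing a cyclic Sylow $p$-subgroup situation governed by the Brauer tree, together with a handful of small exceptions handled by explicit computation. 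Using this classification one rules out nonabelian composition factors: if $G$ is nonsolvable, pick a chief factor which is a direct power of a nonabelian simple group $S$, show $S$ lies in $B_0$ in the relevant sense (characters of the principal block of a normal subgroup extend/induce appropriately, cf.\ the theory of covering blocks), and derive that $S$ must itself have few $p'$-degrees in its principal block, contradicting the classification. I expect this CFSG-dependent classification step, and the bookkeeping needed to pass between $G$, its normal subgroups, and their principal blocks while controlling $p'$-degrees, to be the main obstacle.

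With solvability in hand, $G$ is in particular $p$-solvable, so by the Fong--Reynolds and Fong reduction theory for $p$-solvable groups the principal block $B_0$ has a Sylow $p$-subgroup of $G$ as a defect group, and $\Irr_{p'}(B_0)$ behaves well under the relevant Clifford-theoretic constructions. The task is now to bound the $p$-length. The key observation is that for a $p$-solvable group, the characters of $B_0$ of $p'$-degree are controlled by the structure of $G/\pcore_p(G)$ together with the action on $\pcore_p(G)$; more precisely, I would invoke (or reprove in the block setting) the degree-counting machinery of \cite{GRS}: if $\pcore^{pp'pp'}(G)\neq 1$ then, passing to the quotient $G/\pcore^{pp'p}(G)$ if necessary and using that this quotient still has principal block with at most two $p'$-degrees, one produces three distinct $p'$-degrees in $B_0$ — the degree $1$, a degree coming from a nontrivial character of the top $\pcore_{p'}$-layer, and a strictly larger degree forced by the stacked structure (this is exactly the mechanism by which \cite{GRS} obtains $p$-length $\le 2$ from two $p'$-degrees in all of $\Irr_{p'}(G)$). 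The point $p\ge 5$ enters to guarantee that the relevant nontrivial $p'$-degrees over $\pcore_p(G)$ are large enough (at least $p-1\ge 4$, and the extension degrees combine multiplicatively) to be genuinely distinct from one another and from $1$. Assembling these pieces gives $\pcore^{pp'pp'}(G)=1$, and hence in particular $p$-solvability of $G$, completing the proof.

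Put together: reduce mod $\pcore_{p'}$; prove solvability by a minimal-counterexample argument resting on a CFSG-based classification of simple groups with $\le 2$ principal-block $p'$-degrees; then, for the now-$p$-solvable $G$, transport the degree-stacking argument of \cite{GRS} into the block setting to force the $p$-length down to $2$.
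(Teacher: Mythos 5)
Your outer skeleton matches the paper's: reduce modulo $\pcore_{p'}(G)$ (valid, since $\bigcap_{\chi\in\Irr(B_0)}\ker\chi=\pcore_{p'}(G)$ and block domination for normal $p'$-subgroups gives $\Irr(B_0(G))=\Irr(B_0(G/\pcore_{p'}(G)))$); induct to reach a non-abelian chief factor $N=S_1\times\cdots\times S_t$; invoke a CFSG-based statement about simple groups; and, once $p$-solvability is known, use $\Irr(B_0)=\Irr(G/\pcore_{p'}(G))$ to fall back on \cite[Theorem~A]{GRS} for solvability and the $p$-length bound (your final step re-derives what you could simply quote, but that is only a detour, not an error).

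The genuine gap is in the middle, and it is exactly where the work of the paper lies. The simple-group input you posit --- ``classify the $S$ whose principal block has at most two $p'$-degrees'' --- is not the statement you need, and the step ``derive that $S$ must itself have few $p'$-degrees in its principal block'' does not follow from the hypothesis on $G$: a $p'$-degree character of $B_0(N)$ need not lie under \emph{any} $p'$-degree character of $B_0(G)$, and when it does, its degree only \emph{divides} the degree above. What is actually required (\autoref{simple}) is the existence of two specific non-trivial $\alpha,\beta\in\Irr_{p'}(B_0(S))$ such that (i) $\alpha$ extends to the principal block of every $S\le T\le\Aut(S)$ --- this is what makes the tensor induction $\hat\alpha^{\otimes G}$ an irreducible character of $B_0(G)$ of degree exactly $\alpha(1)^t$ (together with the fact that $B_0$ is the unique block covering $B_M$); (ii) $\beta$ is invariant under a Sylow $p$-subgroup of $\Aut(S)$ --- this is what allows Murai's lemma (\autoref{Murai}) to produce $\chi\in\Irr_{p'}(B_0(G))$ over $\beta_1\times\cdots\times\beta_t$; and (iii) $\beta(1)\nmid\alpha(1)$, a strictly stronger requirement than $\beta(1)\ne\alpha(1)$, because Murai only yields $\beta(1)^t\mid\chi(1)$, so the contradiction must come from $\beta(1)^t\mid\alpha(1)^t$. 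None of (i)--(iii) appears in your sketch, nor does the separate treatment of $P\Omega_8^+(q)$, where triality obstructs extendability and one must settle for constituents restricting to $\chi_i$ or $2\chi_i$ together with the inequality $\chi_1(1)>2\chi_2(1)$. Your expectation that the simple-group step reduces to cyclic Sylow subgroups and Brauer trees is also off: the proposition is proved for \emph{all} simple groups of order divisible by $p$, via hook combinatorics for $\mathfrak{A}_n$, GAP for sporadics, and Lusztig series, unipotent blocks and $d$-Harish-Chandra theory for groups of Lie type.
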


As usual we define $\pcore^{pp'}(G):=\pcore^{p'}(\pcore^{p}(G))$ and so on. It is easy to construct groups of $p$-length $2$ satisfying the hypothesis of Theorem~A (e.\,g. $G=(C_5^5\rtimes C_{11})\rtimes C_5$ with $p=5$). 
In contrast to the main theorem of \cite{GRS} we cannot conclude further that $G$ is solvable since every $p'$-group satisfies the assumption of Theorem~A. Furthermore, the examples given in \cite{GRS} show that Theorem~A does not extend to $p\in\{2,3\}$. 
We also like to mention a conjecture by Malle and Navarro~\cite{MNchardeg}, which generalizes the result of Isaacs and Smith to arbitrary blocks. More precisely, they conjectured that a $p$-block $B$ of $G$ is nilpotent if and only if all height $0$ characters in $B$ have the same degree. We do not know if our main result admits a version for arbitrary blocks. 

%As in \cite{GRS}, 
The proof of Theorem~A relies on the classification of finite simple groups.
In the next section we reduce Theorem~A to a statement about simple groups (\autoref{simple} below), which is proved case-by-case in the following two sections. 
We care to remark that in the case of alternating groups, \autoref{simple} is deduced as a consequence of a more general statement giving a lower bound for the number of (extendable) $p'$-character degrees in \emph{any} block of maximal defect. This is \autoref{cor: an} below, which we believe is of independent interest. 

\section{Reduction to simple groups}

The following proposition about simple groups will be proven in the next two sections.

\begin{Prop}\label{simple}
Let $S$ be a finite non-abelian simple group of order divisible by a prime $p\ge 5$. 
\begin{enumerate}[(i)]
\item If $S\ne P\Omega_8^+(q)$, then there exist $\alpha,\beta\in\Irr(S)$ with the following properties:
\begin{itemize}
\item $\alpha\ne 1\ne\beta$,
\item $\alpha(1)$ and $\beta(1)$ are not divisible by $p$,
\item for every $S\le T\le\Aut(S)$, $\alpha$ extends to a character in the principal block of $T$,
\item $\beta$ lies in the principal block of $S$ and is $P$-invariant for some Sylow $p$-subgroup $P$ of $\Aut(S)$,
\item $\beta(1)\nmid\alpha(1)$.
\end{itemize}

\item If $S=P\Omega_8^+(q)$, then there exist $\alpha,\beta\in\Irr(S)$ with the following properties:
\begin{itemize}
\item $\alpha\ne 1\ne\beta$,
\item $\alpha(1)$ and $\beta(1)$ are not divisible by $p$,
\item $\alpha(1)>2\beta(1)$,
\item for every $S\le T\le\Aut(S)$ there exist $\hat\alpha,\hat\beta\in\irr{\Aut(T)}$ in the principal block such that $\hat\alpha_S\in\{\alpha,2\alpha\}$ and $\hat\beta_S\in\{\beta,2\beta\}$.
\end{itemize}
\end{enumerate}
\end{Prop}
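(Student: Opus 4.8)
The plan is to prove Proposition~\ref{simple} by a case-by-case analysis over the classification of finite simple groups, treating the families separately: alternating groups, sporadic groups (including the Tits group), and groups of Lie type in their defining and non-defining characteristic. In each case the task is to exhibit two small non-trivial $p'$-degree characters of the principal block, one of which extends to the full automorphic overgroup (again in the principal block) and the other of which is invariant under a Sylow $p$-subgroup of $\Aut(S)$, with a non-divisibility relation between their degrees. Since $p\ge 5$, the prime $p$ is always odd and coprime to many of the structural invariants, which is what makes the argument work.

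\textbf{Alternating groups.} For $S=A_n$ with $n\ge 5$, the key point is that one has good control over $p'$-character degrees via the hook-length formula and the theory of $p$-cores and $p$-quotients. I would prove the stronger statement advertised as \autoref{cor: an}: any block of maximal defect of $\mathfrak{S}_n$ contains at least a certain number of characters of $p'$-degree that extend to $\mathfrak{S}_n$ (resp. restrict irreducibly to $A_n$), and in particular the principal block does. Concretely, the principal block of $\mathfrak{S}_n$ corresponds to the empty $p$-core, and one can write down explicit partitions (e.g. those whose $p$-quotient is a very small multipartition, or partitions obtained by adding a single hook of length $p$ to a staircase-type shape) giving the two required degrees; the extendibility to $\mathfrak{S}_n$ and the principal-block membership (via Brauer's criterion on the $p$-core) are then automatic, and the $p$-invariance for $A_n$ follows since most of these characters are already $\mathfrak{S}_n$-invariant. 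The non-divisibility $\beta(1)\nmid\alpha(1)$ is checked numerically for small $n$ and asymptotically via degree growth; some care is needed for $n\in\{5,6,7\}$ and for the small primes $p=5,7$ relative to $n$, which I expect to handle by direct inspection.

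\textbf{Sporadic groups and groups of Lie type.} For the $26$ sporadic groups together with ${}^2F_4(2)'$, I would simply read off two suitable characters from the ATLAS / GAP character table library for each relevant prime $p\ge 5$ dividing $|S|$, checking principal-block membership via the central character / $\lambda$-values and checking $\Aut(S)$-invariance or extendibility using that $\Out(S)$ has order $1$ or $2$. For groups of Lie type, the natural candidates are the Steinberg character (which is trivial on $p$ when $p$ is coprime to $q$, always extends, and lies in the principal block only for $p\mid q$, so more precisely I would use unipotent characters of small degree such as those labelled by two-row symbols or the ``reflection'' character of degree roughly $q^{d}-1$), together with a second unipotent or semisimple character. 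Unipotent characters are a natural choice because they extend to $\Aut(S)$ modulo the subtlety recorded in part~(ii) for $P\Omega_8^+(q)$, where the exceptional triality automorphism forces the factor-of-$2$ fudge and the weaker conclusion $\alpha(1)>2\beta(1)$; they also lie in the principal $p$-block precisely when $p$ divides the relevant cyclotomic factor, which for $p\ge 5$ one can arrange by choosing the character according to the order of $q$ modulo $p$. One must separate the defining-characteristic case $p\mid q$ (where essentially every irreducible character lies in the principal block, so only the $p'$-degree, smallness, and extendibility conditions remain) from the non-defining case $p\nmid q$ (where block membership is governed by $d$-Harish-Chandra theory / the order of $q$ mod~$p$).

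\textbf{Main obstacle.} The hard part is the groups of Lie type in non-defining characteristic: one needs, uniformly across all types and ranks and for every $p\ge 5$ with $p\mid|S|$, two unipotent (or semisimple) characters lying in the principal $p$-block, of $p'$-degree, with controlled extendibility to $\Aut(S)$ and with a non-divisibility relation. Ensuring principal-block membership requires a careful bookkeeping of which unipotent characters have the full Sylow $p$-torus in their defect group — equivalently lie in the principal $\ell$-block — and this interacts delicately with the choice of the integer $d=\mathrm{ord}_p(q)$ and with the exceptional extendibility failures (graph automorphisms of types $A$, $D$, $E_6$ and the triality of $D_4$). I would manage this by splitting into cases according to whether $d=1$, $d=2$, or $d>2$, using the trivial character's Harish-Chandra ``neighbours'' and the known generic degree formulas, and isolating $P\Omega_8^+(q)$, the low-rank groups $\mathrm{PSL}_2(q)$, ${}^2B_2(q)$, ${}^2G_2(q)$, ${}^2F_4(q)$, and a short list of small $q$, for bespoke treatment.
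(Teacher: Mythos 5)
Your plan follows essentially the same route as the paper: a CFSG case division in which the alternating groups are handled by $p$-core/hook-length combinatorics via a stronger counting bound (exactly the paper's \autoref{cor: an}), the sporadic groups by computer check, and the groups of Lie type split into defining and non-defining characteristic using semisimple and unipotent characters organized by the order $d$ of $q$ modulo $p$ and $d$-Harish-Chandra theory, with $P\Omega_8^+(q)$ and the low-rank groups ($\mathrm{PSL}_2$, $\mathrm{PSL}_3^\epsilon$, Suzuki, Ree, $\tw{2}F_4$) isolated for bespoke treatment. One small correction that does not affect the plan (since you pivot to other unipotent characters anyway): in defining characteristic the Steinberg character is a block of defect zero on its own and hence never lies in the principal block, which is the opposite of your parenthetical claim; in non-defining characteristic it does lie in the principal block in many (though not all) cases and the paper indeed uses it there.
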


We make use of the following results.

\begin{Lem}[Murai~{\cite[Lemma~4.3]{MuraiHZ}}]\label{Murai}
Let $N\unlhd G$ be finite groups with principal $p$-blocks $B_N$ and $B_G$ respectively. Suppose that $\psi\in\Irr_{p'}(B_N)$ is invariant under a Sylow $p$-subgroup of $G$. Then there exists a character $\chi\in\Irr_{p'}(B_G)$ lying over $\psi$.
\end{Lem}

\begin{Lem}\label{prinprod}
Let $\chi,\psi\in\Irr(B_0)$ where $B_0$ is the principal $p$-block of $G$. Suppose that $p\nmid\chi(1)$ and $\chi\psi\in\Irr(G)$. Then $\chi\psi\in\Irr(B_0)$.
\end{Lem}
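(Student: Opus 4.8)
The plan is to use Brauer's characterization of the principal block via central characters (or equivalently via $p$-Brauer characters and the block orthogonality relations). Recall that an irreducible character $\theta\in\Irr(G)$ lies in the principal block $B_0$ if and only if its associated central character $\omega_\theta$ agrees, after reduction modulo a maximal ideal $\mathfrak{m}$ of the ring of algebraic integers lying over $p$, with the trivial central character; that is, $\omega_\theta(\hat{C})\equiv |C|\pmod{\mathfrak{m}}$ for every conjugacy class sum $\hat{C}=\sum_{g\in C}g$ in $\ZZ[G]$. Here $\omega_\theta(\hat{C})=|C|\theta(g_C)/\theta(1)$ for a representative $g_C\in C$.

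First I would record the hypothesis in this language: since $\chi,\psi\in\Irr(B_0)$, we have $|C|\chi(g_C)/\chi(1)\equiv |C|$ and $|C|\psi(g_C)/\psi(1)\equiv |C|\pmod{\mathfrak{m}}$ for all classes $C$. Next, because $p\nmid\chi(1)$, the integer $\chi(1)$ is a unit modulo $\mathfrak{m}$, so the first congruence can be rewritten as $\chi(g_C)\equiv\chi(1)\pmod{\mathfrak{m}}$ for every $g_C$; in other words $\chi$ is congruent modulo $\mathfrak{m}$ to the constant function $\chi(1)$ on all of $G$. Then I would compute the central character of the product: for $\theta:=\chi\psi\in\Irr(G)$ we have $\theta(1)=\chi(1)\psi(1)$ and
\[
\omega_\theta(\hat{C})=|C|\,\frac{\chi(g_C)\psi(g_C)}{\chi(1)\psi(1)}.
\]
Using $\chi(g_C)\equiv\chi(1)\pmod{\mathfrak{m}}$ and the fact that $\chi(1)$ is a unit mod $\mathfrak{m}$, this is congruent to $|C|\,\psi(g_C)/\psi(1)$, which in turn is congruent to $|C|$ because $\psi\in B_0$. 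Hence $\omega_\theta$ reduces to the trivial central character modulo $\mathfrak{m}$, so $\theta=\chi\psi\in B_0$.

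I expect the only real point requiring care — rather than a genuine obstacle — is the bookkeeping of the algebraic setup: fixing a splitting $p$-modular system $(K,\mathcal{O},k)$ with a maximal ideal $\mathfrak{m}$ of $\mathcal{O}$ over $p$, and citing the standard fact (e.g. from Navarro's or Nagao--Tsushima's book) that membership in $B_0$ is detected by the reduction of the central character modulo $\mathfrak{m}$, together with the fact that $\omega_\theta$ takes values in $\mathcal{O}$ (indeed in the ring of algebraic integers) so that these reductions make sense. Once that is in place the computation above is a one-line verification. An alternative, essentially equivalent route would be to argue with $p$-Brauer characters: restrict $\chi$ to $p$-regular elements, note that $p\nmid\chi(1)$ forces $\chi$ to be congruent to $\chi(1)\cdot 1_{p\text{-reg}}$ on $p$-regular classes modulo $\mathfrak{m}$, and deduce that $\chi\psi$ and $\psi$ have the same image in the Grothendieck group of $kG$-modules up to the unit $\chi(1)$, hence lie in the same block; but the central-character argument is the cleanest to write down.
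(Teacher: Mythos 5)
Your central-character framework is a legitimate way to detect membership in $B_0$, but the crucial cancellation step is invalid and the intermediate claim it produces is false. From $\omega_\chi(\hat C)=|C|\chi(g_C)/\chi(1)\equiv|C|\pmod{\mathfrak m}$ you may cancel the unit $\chi(1)$, but you may \emph{not} cancel the factor $|C|$: what you actually get is $|C|\bigl(\chi(g_C)-\chi(1)\bigr)\equiv 0\pmod{\mathfrak m}$, which yields $\chi(g_C)\equiv\chi(1)$ only for classes with $p\nmid|C|$. For classes with $p\mid|C|$ the asserted congruence genuinely fails: take $G=\mathfrak{S}_5$, $p=5$ and $\chi=\chi^{(4,1)}$ of degree $4$, which lies in the principal $5$-block; on a transposition (class size $10$, and a $5$-regular element, so this also defeats your proposed Brauer-character variant) it takes the value $2$, and $2\not\equiv 4\pmod{\mathfrak m}$. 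Nor can the gap be absorbed at the end of the computation: for a class with $p\mid|C|$ you would need $\omega_{\chi\psi}(\hat C)=\omega_\chi(\hat C)\cdot\psi(g_C)/\psi(1)$ to lie in $\mathfrak m$, and while the first factor does lie in $\mathfrak m$, the second need not be a local integer since $p$ may divide $\psi(1)$; so the product, although an algebraic integer, need not lie in $\mathfrak m$. As written, the argument therefore only verifies the block condition on classes of $p'$-size, which is not enough to place $\chi\psi$ in $B_0$.

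The paper's proof takes a different and shorter route through orthogonality on $p$-regular elements rather than central characters: writing $[\cdot,\cdot]^0$ for the inner product restricted to $p$-regular elements, one has $[\chi\psi,1]^0=[\chi,\overline{\psi}]^0$, and since $\chi$ and $\overline{\psi}$ lie in the same block $B_0$ with $\chi$ of height zero, this is nonzero by \cite[Corollary~3.25]{Navarro}; block orthogonality \cite[Theorem~3.19]{Navarro} then forces $\chi\psi$ and $1_G$ into the same block. Note that the hypothesis $p\nmid\chi(1)$ enters there as a height-zero condition feeding into Corollary~3.25, not as an invertibility statement about $\chi(1)$ modulo $\mathfrak m$; that is the ingredient your congruence computation is missing, and I do not see how to complete your approach without importing something of comparable strength.
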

\begin{proof}
Clearly, $\overline{\psi}\in\Irr(B_0)$. Hence by \cite[Corollary~3.25]{Navarro}, we have
\[[\chi\psi,1]^0=[\chi,\overline{\psi}]^0\ne 0.\]
The claim follows from \cite[Theorem~3.19]{Navarro}.
\end{proof}

Now we are in a position to reduce Theorem~A to simple groups.

\begin{Thm}\label{2degs}
If \autoref{simple} holds, then Theorem~A holds. 
\end{Thm}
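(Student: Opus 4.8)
The plan is to argue by contradiction, taking $G$ to be a minimal counterexample to Theorem~A among finite groups possessing a principal block $B_0$ with $|\{\chi(1):\chi\in\irrp{B_0}\}|\le 2$. First I would dispose of easy structural reductions. Since every $p'$-group trivially satisfies the hypothesis and also the conclusion, and since $\pcore_{p'}(G)$ lies in the kernel of every character of $B_0$ (the principal block is the unique block containing the trivial character, and characters of $B_0$ are exactly those with $\pcore_{p'}(G)$ in their kernel up to the usual correspondence), we may pass to $G/\pcore_{p'}(G)$ and hence assume $\pcore_{p'}(G)=1$. If $G$ itself were a $p'$-group it would be trivial, so $p\mid|G|$. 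The target conclusion $\pcore^{pp'pp'}(G)=1$ together with solvability of $G$ (using $\pcore_{p'}(G)=1$) must be derived; note that once $\pcore^{pp'pp'}(G)=1$ one gets $p$-solvability and, combined with the structure, solvability.

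Next I would locate a non-abelian chief factor to contradict. If $G$ is $p$-solvable one argues directly as in \cite{GRS}: a $p$-solvable group with $\pcore_{p'}(G)=1$ and $\pcore^{pp'pp'}(G)\ne 1$ would have a section violating the bound on $p'$-character degrees in the principal block, but here characters of $p'$-degree in $B_0$ of $G$ restrict/inflate compatibly along the normal series $1\le\pcore_p(G)\le\pcore^{p'}(\cdots)\le\cdots$, and one uses \autoref{Murai} to lift Sylow-$p$-invariant $p'$-degree characters of $B_0$ of a normal subgroup into $B_0$ of $G$, together with \autoref{prinprod} to multiply a linear character of $B_0$ by such a lift and stay in $B_0$ — this manufactures a third $p'$-degree in the principal block once the $p$-length is large enough, contradicting $|\{\chi(1):\chi\in\irrp{B_0}\}|\le2$. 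So the genuinely new work is the case where $G$ is \emph{not} $p$-solvable: then $G$ has a chief factor $L=S_1\times\cdots\times S_k$ with $S_i\cong S$ a non-abelian simple group whose order is divisible by $p$ (since $\pcore_{p'}(G)=1$ forces some chief factor below the relevant layer to have order divisible by $p$; actually one arranges that $L$ is the unique minimal normal subgroup after the reductions, or at least that $L$ has order divisible by $p$).

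Now comes the heart, where \autoref{simple} is used. Pick $\beta\in\Irr(S)$ as supplied: $\beta\ne 1$, $p\nmid\beta(1)$, $\beta$ in the principal block of $S$, and $\beta$ invariant under a Sylow $p$-subgroup of $\Aut(S)$. Form $\psi:=\beta\times\cdots\times\beta\in\Irr(L)$; it lies in the principal block of $L$, has $p'$-degree $\beta(1)^k$, and is invariant under a Sylow $p$-subgroup of $G$ (because each $\beta$ is $\Aut(S)$-Sylow-$p$-invariant and the wreathing permutation part is handled by fixing a Sylow $p$-subgroup of the top group acting on the $k$ coordinates — here one is a little careful, but the point is that $N_G(L)/C_G(L)$ embeds into $\Aut(S)\wr S_k$ and a Sylow $p$-subgroup can be chosen to normalize the ``diagonal'' tuple $\psi$). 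By \autoref{Murai} applied to $L\unlhd G$ there is $\chi\in\irrp{B_0(G)}$ lying over $\psi$, so $\beta(1)^k\mid\chi(1)$ and in particular $\chi(1)\ne1$, giving one nonlinear $p'$-degree $d_1:=\chi(1)$ in $B_0$. To get a \emph{second}, independent degree I would use $\alpha\in\Irr(S)$ from \autoref{simple}: $\alpha$ extends to the principal block of any $S\le T\le\Aut(S)$, hence (standard Clifford theory / the fact that a character invariant and extendible along each simple factor extends to the stabilizer, combined with the extension to the wreathed top group) $\alpha\times\cdots\times\alpha$ extends to a character $\hat\alpha$ of $G$ in the principal block with $\hat\alpha(1)=\alpha(1)^k$, again of $p'$-degree. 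Now either $\alpha(1)^k$ and $\chi(1)$ are already two distinct $p'$-degrees of $B_0$ with $\alpha(1)^k\nmid\chi(1)$ or some further divisibility, or — to force a genuine third degree and nail the contradiction with the ``$\le2$'' hypothesis — I would multiply: $\hat\alpha$ is an extension, so on a well-chosen linear character $\lambda$ of $B_0$ of $G/\text{(derived-type quotient)}$ one has $\lambda\hat\alpha\in\Irr(B_0)$ by \autoref{prinprod} of degree $\alpha(1)^k$, while $\chi$ gives degree divisible by $\beta(1)^k$; the key numerical input from \autoref{simple} is precisely $\beta(1)\nmid\alpha(1)$, which propagates to $\beta(1)^k\nmid\alpha(1)^k$ and separates the two degrees, and then pushing one more step up the normal series (using that $G$ is a minimal counterexample, so $G/L$-type quotients satisfy the conclusion but $G$ does not) produces the forbidden third value. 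The $P\Omega_8^+(q)$ case is handled the same way using part (ii) of \autoref{simple}, where the ``$\alpha(1)>2\beta(1)$'' and the factor-of-$2$ ambiguity in the restrictions $\hat\alpha_S,\hat\beta_S$ are exactly what is needed to keep the degrees distinct after the possible doubling.

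The main obstacle I expect is the bookkeeping in the non-$p$-solvable case: making precise that a Sylow $p$-subgroup of $G$ can be chosen to fix the diagonal tuple $\psi=\beta\times\cdots\times\beta$ (so that \autoref{Murai} applies), and then organizing the induction so that the two (or three) $p'$-degrees one constructs in $B_0(G)$ are provably distinct rather than merely ``generically'' distinct — this is where the carefully chosen divisibility property $\beta(1)\nmid\alpha(1)$ in \autoref{simple}, together with \autoref{prinprod} to stay inside $B_0$ after multiplying by linear characters, does the real work. A secondary technical point is ensuring $\pcore^{pp'pp'}(G)=1$ rather than merely bounding the $p$-length abstractly: one must track the normal series $\pcore^{p}(G)\supseteq\pcore^{pp'}(G)\supseteq\cdots$ and show that if it had not terminated by the fourth step one could climb it with \autoref{Murai} and \autoref{prinprod} to exhibit three distinct $p'$-degrees in $B_0$, contradiction. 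The $p\ge5$ hypothesis enters exactly here and in \autoref{simple}, mirroring \cite{GRS}.
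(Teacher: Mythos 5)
Your overall strategy coincides with the paper's (minimal normal subgroup $N=S_1\times\dots\times S_t$ of order divisible by $p$, an $\alpha$-character pinning down the unique non-trivial $p'$-degree of $B_0$ as $\alpha(1)^t$, a $\beta$-character fed into \autoref{Murai}, and a final contradiction from $\beta(1)\nmid\alpha(1)$), but the two steps you wave at are exactly where the proof lives, and as written they are gaps. The first concerns $\alpha$: you claim that $\alpha\times\dots\times\alpha$ ``extends to a character $\hat\alpha$ of $G$ in the principal block'' of degree $\alpha(1)^t$. Clifford theory plus \autoref{simple} only gives an extension of $\alpha$ to the principal block of $\N_G(S_1)$ (via $\N_G(S_1)/\C_G(S_1)\le\Aut(S)$), and tensor induction then produces an irreducible character $\psi$ of $G$ of the exact degree $\alpha(1)^t$. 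What is missing is any argument that $\psi$ lies in $B_0(G)$: the group $G$ is not almost simple, so the ``extends to the principal block of every $S\le T\le\Aut(S)$'' clause does not apply to $G$ itself. The paper closes this by showing $\psi_M$ lies in the principal block $B_M$ of $M=\bigcap_i\N_G(S_i)$ (using \autoref{prinprod}) and then that $B_0$ is the \emph{only} block of $G$ covering $B_M$, which requires $\C_G(Q)\le M$ for $Q\in\Syl_p(M)$, Brauer's third main theorem and block regularity. Without this you cannot conclude that the unique non-trivial $p'$-degree of $B_0$ equals $\alpha(1)^t$, and the exact value (rather than a mere divisor) is indispensable: knowing only $\alpha(1)^t\mid d$ and $\beta(1)^t\mid d$ yields no contradiction.

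The second gap is the Sylow-invariance of your diagonal character $\beta\times\dots\times\beta$, which you assert but do not prove; the naive diagonal is in general \emph{not} $P$-invariant, since $P$ permutes the factors via maps $S_i\to S_j$ that need not carry your fixed copy of $\beta$ on $S_i$ to the fixed copy on $S_j$. The paper instead builds $\tau=\beta_1\times\dots\times\beta_t$ orbit by orbit, taking $\beta_i=\beta^{y_i}$ for coset representatives $y_i\in P$ within a single $P$-orbit of factors and, for each new orbit, re-conjugating inside $\N_G(S_i)$ so that the chosen character is $\N_P(S_i)$-invariant; you flagged this yourself as the main obstacle, which is a fair self-assessment. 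Two smaller points: your blanket claim that $\Irr(B_0)$ consists exactly of the characters with $\pcore_{p'}(G)$ in their kernel holds only for $p$-solvable groups (it is \cite[Theorem~10.20]{Navarro}, which is precisely how the paper dispatches the $p$-solvable case by quoting \cite[Theorem~A]{GRS}); in general only one containment is true, which fortunately is all your initial reduction needs. And the closing talk of manufacturing a ``third degree'' by multiplying by linear characters and climbing the normal series is unnecessary: once $\chi(1)$ is forced to equal $\alpha(1)^t$ while being divisible by $\beta(1)^t$, the contradiction with $\beta(1)\nmid\alpha(1)$ is immediate.
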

\begin{proof}
Let $p$, $G$ and $B_0$ be as in Theorem~A.
Suppose first that $G$ is $p$-solvable. Let $N:=\pcore_{p'}(G)$. Then, by \cite[Theorem~10.20]{Navarro}, $\Irr(B_0)=\Irr(G/N)$. It follows from \cite[Theorem~A]{GRS} that $G/N$ is solvable and $\pcore^{pp'pp'}(G/N)=1$. In particular, $\pcore^{pp'p}(G)N/N$ is a $p'$-group. Since $N$ is a $p'$-group, this implies $\pcore^{pp'pp'}(G)=1$.

Thus, it suffices to show that $G$ is $p$-solvable. 
Let $N$ be a minimal normal subgroup of $G$. Since the principal block of $G/N$ lies in $B_0$, we may assume that $G/N$ is $p$-solvable by induction on $|G|$. 
If $N$ is a $p$-group or a $p'$-group, then we are done. Therefore, by way of contradiction, we assume that 
\[N=S_1\times\ldots\times S_t\] 
with isomorphic non-abelian simple groups $S:=S_1\cong\ldots\cong S_t$ of order divisible by $p$. Since $N$ is the unique minimal normal subgroup, $\C_G(N)=1$. Moreover, $G$ permutes $S_1,\ldots,S_t$ transitively by conjugation.

\textbf{Case~1:} $S\ne P\Omega_8^+(q)$.\\
Let $\alpha,\beta\in\Irr(S)$ as in \autoref{simple}. 
We may regard $\alpha$ as a character of $S\C_G(S)$, since $S\C_G(S)/\C_G(S)\cong S/\Z(S)=S$. As such it extends to a character $\hat\alpha$ in the principal block of $\N_G(S)$, because $\N_G(S)/\C_G(S)\le\Aut(S)$. 
Let $M:=\N_G(S_1)\cap\ldots\cap\N_G(S_t)\unlhd G$. Since the principal block of $\N_G(S)$ covers the principal block $B_M$ of $M$, the restriction $\hat\alpha_M$ lies in $B_M$. 
Now by \cite[Corollary~10.5]{Navarro2}, the tensor induction $\psi:=\hat\alpha^{\otimes G}$ is an irreducible character of $G$ with $p'$-degree $\psi(1)=\alpha(1)^t$. 
Let $x_1,\ldots,x_t\in G$ be representatives for the right cosets of $\N_G(S)$ in $G$ such that $S_1^{x_i}=S_i$. 
Then for $g\in M$ we obtain
\[
\psi(g)=\prod_{i=1}^t\hat\alpha^{x_i}(g)
\]
from \cite[Lemma~10.4]{Navarro2}. In particular, $\psi_N=\alpha^{x_1}\times\ldots\times\alpha^{x_t}\in\Irr(N)$ and therefore $\psi_M\in\Irr(M)$ as well.
Since $\hat\alpha_M$ lies in $B_M$, so does $\hat\alpha_M^{x_i}$. Hence, by \autoref{prinprod} also $\psi_M=\hat\alpha_M^{x_1}\ldots\hat\alpha_M^{x_t}$ lies in $B_M$. 

Let $Q$ be a Sylow $p$-subgroup of $M$. Then $Q\cap S_i$ is a Sylow $p$-subgroup of $S_i$. It follows that $\C_G(Q)\subseteq\C_G(Q\cap S_i)\subseteq\N_G(S_i)$ for $i=1,\ldots,t$ and therefore $\C_G(Q)\subseteq M$. Hence, the Brauer correspondent $B_M^G$ is defined (see \cite[Theorem~4.14]{Navarro}) and equals $B_0$ by Brauer's third main theorem.
Every block $B$ of $G$ covering $B_M$ has a defect group containing $Q$ by \cite[Theorem~9.26]{Navarro}.
Hence by \cite[Lemma~9.20]{Navarro}, $B$ is regular with respect to $N$ and therefore $B=B_0$ by \cite[Theorem~9.19]{Navarro}.
Thus, $B_0$ is the only block of $G$ covering $B_M$.
This implies $\psi\in\Irr_{p'}(B_0)$.
Since the trivial character in $B_0$ has degree $1$, $d:=\psi(1)=\alpha(1)^t$ is the unique non-trivial $p'$-character degree in $B_0$ by hypothesis.

Now we work with $\beta$. 
Let $P$ be a Sylow $p$-subgroup of $G$ such that $\beta$ is invariant under $\N_P(S)$. 
Without loss of generality, let $\{S_1,\ldots,S_r\}$ be a $P$-orbit. Let $y_i\in P$ such that $S_1^{y_i}=S_i$ for $i=1,\ldots,r$. Then $\beta_i:=\beta^{y_i}$ lies in the principal block of $S_i$. By \autoref{prinprod}, $\beta_1\times\ldots\times\beta_r$ lies in the principal block of $N$. Moreover, if $\beta_i^y\in\Irr(S_j)$ for some $y\in P$, then $y_iyy_j^{-1}\in\N_P(S)$. Since $\beta$ is $\N_P(S)$-invariant, it follows that $\beta_i^y=\beta^{y_iyy_j^{-1}y_j}=\beta^{y_j}=\beta_j$. This shows that $\{\beta_1,\ldots,\beta_r\}$ is $P$-orbit and $\beta_1\times\ldots\times\beta_r$ is $P$-invariant. If $r<t$, then we consider $\beta_{r+1}:=\beta^{x_{r+1}}\in\Irr(S_{r+1})$. By Sylow's theorem, we can assume after conjugation inside $\N_G(S_{r+1})$ that $\beta_{r+1}$ is $\N_P(S_{r+1})$-invariant. Now we can form the $P$-orbit of $\beta_{r+1}$ to obtain another $P$-invariant character $\beta_{r+1}\times\ldots\times\beta_s\in\Irr(N)$ in the principal block of $N$. We repeat this with every $P$-orbit and eventually get a $PN$-invariant character
\[\tau:=\beta_1\times\ldots\times\beta_t\in\Irr(N)\]
in the principal block of $N$. Since $o(\tau)=1$ and $\gcd(\tau(1),|PN/N|)=1$, $\tau$ extends to $PN$ (see \cite[Corollary~8.16]{Isaacs}). By \autoref{Murai}, there exists some $\chi\in\Irr_{p'}(B_0)$ such that $\tau$ is a constituent of $\chi_N$.
Since $1\ne \beta(1)^t=\tau(1)\mid\chi(1)$, it follows that $\chi(1)=d=\psi(1)$. But then $\beta(1)^t\mid\psi(1)=\alpha(1)^t$ and $\beta(1)\mid\alpha(1)$, a contradiction to the choice of $\alpha$ and $\beta$. 

\textbf{Case~2:} $S=P\Omega_8^+(q)$.\\
Let $\alpha,\beta\in\Irr(S)$ and $\hat\alpha,\hat\beta\in\Irr(\N_G(S))$ as in \autoref{simple}. Since the principal block of $\N_G(S)$ covers $B_M$, $\hat\alpha_M$ is the sum of at most two irreducible characters in $B_M$. If $\alpha_1\in\Irr(B_M)$ is one of those summands, then $\alpha_1^{x_1}\ldots\alpha_1^{x_t}$ restricts to $\alpha^{x_1}\times\ldots\times\alpha^{x_t}\in\Irr(N)$. Hence, by \autoref{prinprod}, $\alpha_1^{x_1}\ldots\alpha_1^{x_t}$ lies in $B_M$. As in Case~1 we see that $(\hat\alpha^{\otimes G})_M$ is a sum of irreducible characters in $B_M$. 
Moreover, $(\hat\alpha^{\otimes G})_N=d(\alpha^{x_1}\times\ldots\times\alpha^{x_t})$ where $d\in\{1,2^t\}$.
Since $B_0$ is the only block of $G$ covering $B_M$, all irreducible constituents of $\hat\alpha^{\otimes G}$ lie in $B_0$. We may choose such a constituent $\chi$ of $p'$-degree. Then $\chi_N=e(\alpha^{x_1}\times\ldots\times\alpha^{x_t})$ for some integer $e\le d\le 2^t$. Similarly, we choose a constituent $\psi$ of $\hat{\beta}^{\otimes G}$ with $p'$-degree. Then by \autoref{simple} we derive the contradiction
\[\alpha(1)^t>2^t\beta(1)^t\ge\psi(1)=\chi(1)\ge\alpha(1)^t.\qedhere\]
\end{proof}

\section{Alternating groups}\label{alt}

This section is devoted to proving \autoref{simple} for the alternating groups $S=\mathfrak{A}_n$ where $n\ge 5$. It is well-known that $\Aut(S)\cong\mathfrak{S}_n$ is the symmetric group unless $n=6$. 

Given $n\in\mathbb{N}$ we let $\mathcal{P}(n)$ be the set of partitions of $n$. Let $\lambda=(\lambda_1,\ldots, \lambda_k)\in\mathcal{P}(n)$. Adopting the notation of \cite[Chapter 1]{OlssonCombi} we let $\ell(\lambda)=k$ denote the number of parts of $\lambda$, and $\mathcal{Y}(\lambda)$ be the Young diagram of $\lambda$. Given a node $(i,j)\in\mathcal{Y}(\lambda)$ we denote by 
 $h_{ij}(\lambda)$ the length of the hook corresponding to $(i,j)$.  
If $q\in\mathbb{N}$ then the $q$-core $C_q(\lambda)$ of $\lambda$ is the partition obtained from $\lambda$ by successively removing all hooks of length $q$ (usually called $q$-hooks). We denote by $\mathcal{H}^q(\lambda)$ the subset of nodes of $\mathcal{Y}(\lambda)$ having associated hook-length divisible by $q$. 
A partition $\gamma$ is called a $q$-core if $\mathcal{H}^q(\lambda)=\emptyset$.

The set $\mathrm{Irr}(\mathfrak{S}_n)$ is naturally in bijection with $\mathcal{P}(n)$. Given $\lambda\in\mathcal{P}(n)$ we let $\chi^\lambda$ be the corresponding irreducible character of 
$\mathfrak{S}_n$. Let $p$ be a prime and $\lambda, \mu\in\mathcal{P}(n)$. By \cite[6.1.21]{JamesKerber} we know that $\chi^\lambda$ and $\chi^\mu$ lie in 
the same $p$-block of $\mathfrak{S}_n$ if and only if $C_p(\lambda)=C_p(\mu)$. If $\gamma$ is a $p$-core partition then 
we denote by $B(\mathfrak{S}_n, \gamma)$ the corresponding $p$-block of $\mathfrak{S}_n$. We use the notation $\lambda\vdash_{p'}n$ to say that $\chi^\lambda$ has degree coprime to $p$.

The following result follows from \cite{Mac} and it will be extremely useful for our purposes. 

\begin{Lem}   \label{lem: last layer}
Let $p$ be a prime and let $n$ be a natural number with $p$-adic expansion
 $n=\sum_{j=0}^ka_jp^j$. Let $\lambda$ be a partition of $n$. Then $\lambda\vdash_{p'}n$ if and only if $|\cH^{p^k}(\lambda)|=a_k$ and $C_{p^k}(\lambda)\vdash_{p'}n-a_kp^k$. 
\end{Lem}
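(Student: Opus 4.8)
The plan is to translate the condition $\lambda\vdash_{p'}n$ into a statement about $p$-adic valuations via the hook length formula, and then to understand how the relevant hook counts change when one passes from $\lambda$ to its $p^k$-core. First I would recall the hook length formula $\chi^\lambda(1)=n!/\prod_{(i,j)\in\mathcal{Y}(\lambda)}h_{ij}(\lambda)$ together with Legendre's formula $\nu_p(n!)=\sum_{m\ge1}\lfloor n/p^m\rfloor$. Writing $w_m(\lambda):=|\cH^{p^m}(\lambda)|$ and using $\nu_p(h)=\sum_{m\ge1}[p^m\mid h]$, one obtains
\[\nu_p\bigl(\chi^\lambda(1)\bigr)=\sum_{m\ge1}\bigl(\lfloor n/p^m\rfloor-w_m(\lambda)\bigr),\]
which is essentially Macdonald's valuation formula from \cite{Mac}. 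The key point is that each summand is non-negative: $w_m(\lambda)$ is nothing but the $p^m$-weight of $\lambda$, that is $w_m(\lambda)=(n-|C_{p^m}(\lambda)|)/p^m$ (the standard identification of the number of hook lengths divisible by $d$ with the $d$-weight; see e.g.\ \cite{OlssonCombi}), so being a non-negative integer bounded above by $n/p^m$ it is at most $\lfloor n/p^m\rfloor$. Hence $\lambda\vdash_{p'}n$ if and only if $w_m(\lambda)=\lfloor n/p^m\rfloor$ for every $m\ge1$.

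Next I would note that this system of equalities is non-trivial only for $1\le m\le k$: the largest hook length occurring in a partition of $n$ is $\lambda_1+\ell(\lambda)-1\le n<p^{k+1}$, so $w_m(\lambda)=0=\lfloor n/p^m\rfloor$ for all $m>k$, and likewise $w_m(C_{p^k}(\lambda))=0=\lfloor n'/p^m\rfloor$ for $m>k$, where $n':=n-a_kp^k<p^k$. It remains to relate $w_m(\lambda)$ to $w_m(C_{p^k}(\lambda))$ for $1\le m\le k$. Here the one genuinely combinatorial ingredient is the identity $C_{p^m}\bigl(C_{p^k}(\lambda)\bigr)=C_{p^m}(\lambda)$ for $m\le k$: on the abacus with $p^m$ runners, the removal of a $p^k$-hook slides a single bead up by $p^{k-m}$ positions inside one runner (possible precisely because $p^m\mid p^k$), which does not change the number of beads on any runner, hence does not change the $p^m$-core. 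Combining this with $w_m(\mu)=(|\mu|-|C_{p^m}(\mu)|)/p^m$ yields
\[w_m(\lambda)=w_m\bigl(C_{p^k}(\lambda)\bigr)+p^{k-m}\,w_k(\lambda)\qquad(1\le m\le k).\]

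Finally I would assemble the pieces. The case $m=k$ of the last display (or the weight formula directly) gives $w_k(\lambda)=\lfloor n/p^k\rfloor=a_k$, i.e.\ the condition $|\cH^{p^k}(\lambda)|=a_k$. Since $a_kp^k$ is divisible by $p^m$ for $m\le k$, we have $\lfloor n/p^m\rfloor=\lfloor n'/p^m\rfloor+a_kp^{k-m}$; hence, granted $w_k(\lambda)=a_k$, the equality $w_m(\lambda)=\lfloor n/p^m\rfloor$ is equivalent to $w_m(C_{p^k}(\lambda))=\lfloor n'/p^m\rfloor$. Therefore the system ``$w_m(\lambda)=\lfloor n/p^m\rfloor$ for all $m$'' holds if and only if $w_k(\lambda)=a_k$ and ``$w_m(C_{p^k}(\lambda))=\lfloor n'/p^m\rfloor$ for all $m$'' holds, which by the first paragraph says exactly that $|\cH^{p^k}(\lambda)|=a_k$ and $C_{p^k}(\lambda)\vdash_{p'}n'$. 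The main obstacle is the core-stability identity $C_{p^m}(C_{p^k}(\lambda))=C_{p^m}(\lambda)$ (equivalently, controlling how $|\cH^{p^m}|$ changes under a $p^k$-hook removal); everything else is elementary bookkeeping with $p$-adic valuations, and one must also take care to invoke the identification of $|\cH^d(\lambda)|$ with the $d$-weight, which is used throughout.
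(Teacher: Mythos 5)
Your proof is correct, and every step checks out: the valuation identity $\nu_p(\chi^\lambda(1))=\sum_{m\ge1}(\lfloor n/p^m\rfloor-|\cH^{p^m}(\lambda)|)$, the identification of $|\cH^{p^m}(\lambda)|$ with the $p^m$-weight (whence non-negativity of each summand), the core-stability $C_{p^m}(C_{p^k}(\lambda))=C_{p^m}(\lambda)$ for $m\le k$, and the final bookkeeping are all sound. The paper offers no proof of this lemma at all --- it defers entirely to Macdonald's paper [Mac] --- and your argument is essentially a self-contained reconstruction of exactly the valuation formula and $p$-core tower analysis that the citation encapsulates, so there is nothing to fault and no genuinely different route being taken.
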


A straightforward consequence of \autoref{lem: last layer} is that $\mathrm{Irr}_{p'}(B(\mathfrak{S}_n, \gamma))\neq\emptyset$ if and only if $|\gamma|<p$.

For $\lambda\in\mathcal{P}(n)$, we denote by $\lambda'$ its conjugate partition. 
From \cite[2.5.7]{JamesKerber} we know that $\psi^\lambda:=(\chi^\lambda)_{\mathfrak{A}_n}$ is irreducible if and only if $\lambda\neq\lambda'$. In this case $\chi^\lambda$ and $\chi^{\lambda'}$ are all the extensions of $\psi^\lambda$ to $\mathfrak{S}_n$. Let $\lambda,\mu$ be non-self-conjugate partitions of $n$. Then $\psi^\lambda$ and $\psi^\mu$ lie in the same $p$-block of $\mathfrak{A}_n$ if and only if $C_p(\lambda)\in\{C_p(\mu), C_p(\mu)'\}$. It follows that also $p$-blocks of $\mathfrak{A}_n$ can be labeled by $p$-core partitions, by keeping in mind that conjugated $p$-cores label the same $p$-block. We denote by $B(n; \gamma)$ the $p$-block of $\mathfrak{A}_n$ labeled by $\gamma$. 

In order to show that \autoref{simple} holds for alternating groups, we introduce the following conventions. 
\begin{Notation}\label{not: an}
Let $B$ be a $p$-block of $\mathfrak{A}_n$. We let $\mathrm{cd}_{p'}^{ext}(B)$ be the set of degrees of irreducible characters in $B$ of degree coprime to $p$ that extend to an irreducible character of $\mathfrak{S}_n$.
Moreover, when $S$ is a subset of $\mathcal{P}(n)$ we let $\mathrm{cd}(S)=\{\chi^\lambda(1)\ |\ \lambda\in S\}.$
\end{Notation}

Observe that if $B$ is the principal $p$-block of $\mathfrak{A_n}$ and $\psi^\lambda$ lies in $B$ and extends to $\mathfrak{S}_n$, then one of the two extensions of $\psi^\lambda$ lies in the principal $p$-block of $\mathfrak{S}_n$. This is explained in \cite{OlssonBlocks}. 
Even if in this article we are mainly interested in studying the principal block, in \autoref{cor: an} below we are going to compute an explicit lower bound for $|\mathrm{cd}_{p'}^{ext}(B(n,\gamma))|$, for any $p$-core $\gamma$ such that $|\gamma|<p$. 

Given $\gamma=(\gamma_1,\ldots, \gamma_\ell)\vdash n$ and natural numbers $x$ and $y$, we denote by $\gamma\star (x, y)$ the partition of $n+x+y$ defined by 
\[\gamma\star (x, y)=(\gamma_1+x,\gamma_2,\ldots, \gamma_\ell,1^y).\]

We start by proving a technical lemma that will be useful later in this section. 

\begin{Lem}\label{lem:alt1}
Let $p$ be a prime, let $m,n,w\in\mathbb{N}$ be such that $m<p$ and $n=m+pw$. Let $\gamma\vdash m$ and let $a\in\mathbb{N}$ be such that $\lfloor\frac{w+1}{2}\rfloor+1\leq a\leq w$. 
Setting $\lambda=\gamma\star (ap, (w-a)p)$ and $\mu=\gamma\star ((a-1)p, (w-a+1)p)$, we have that 
$\chi^\lambda(1)<\chi^\mu(1)$. 
\end{Lem}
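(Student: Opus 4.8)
The plan is to compute both hook-length products via the classical hook-length formula $\chi^\nu(1)=n!/\prod_{(i,j)\in\mathcal{Y}(\nu)}h_{ij}(\nu)$ and to track how the multiset of hook lengths changes when one passes from $\mu=\gamma\star((a-1)p,(w-a+1)p)$ to $\lambda=\gamma\star(ap,(w-a)p)$. Since both partitions are obtained from the same $p$-core $\gamma$ by adjoining a long first row and a long first column, the bulk of the Young diagram (the part coming from $\gamma$ and the ``inner rectangle'' of the first row/column arms) contributes identically, and only the arm lengths along the first row and first column differ. Concretely, $\lambda$ has first row of length $\gamma_1+ap$ and first column of length $\ell(\gamma)+(w-a)p$, whereas $\mu$ has first row $\gamma_1+(a-1)p$ and first column $\ell(\gamma)+(w-a+1)p$; so moving from $\mu$ to $\lambda$ lengthens the first-row arm by $p$ and shortens the first-column leg by $p$. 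I would set up the ratio $\chi^\lambda(1)/\chi^\mu(1)$ as an explicit quotient of products of consecutive integers (the new hook lengths appearing in the first row of $\lambda$, divided by those appearing in the first column of $\mu$, with the shared entries cancelling, plus the single corner hook $h_{11}$ in each case).

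Carrying this out, the corner hook lengths are $h_{11}(\lambda)=\gamma_1+ap+\ell(\gamma)+(w-a)p-1=|\gamma|+\text{(stuff)}+wp-1$ and similarly $h_{11}(\mu)=\gamma_1+(a-1)p+\ell(\gamma)+(w-a+1)p-1$, which are in fact \emph{equal} (both equal $\gamma_1+\ell(\gamma)-1+wp$); so the corner contributes a factor of $1$. The remaining discrepancy is a ratio of the form
\[
\frac{\chi^\lambda(1)}{\chi^\mu(1)}=\frac{\prod(\text{first-row hooks gained})}{\prod(\text{first-column hooks lost})},
\]
where both the numerator and denominator are products of roughly $p$ consecutive integers, but the integers in the denominator (coming from the longer leg of $\mu$) are systematically \emph{larger} than those in the numerator precisely because $a\ge\lfloor(w+1)/2\rfloor+1$ forces $w-a+1<a$, i.e. the column arm that is being removed is longer than the row arm that is being added. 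Making this precise is a matter of pairing up factors: one shows $\prod(\text{gained})<\prod(\text{lost})$ by a term-by-term comparison once the index ranges are written down carefully, using $a>w-a$.

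The main obstacle I anticipate is the bookkeeping of exactly which hook lengths change and by how much: adjoining $x$ extra boxes to the first row of $\gamma$ changes not only the $x$ new hooks in row $1$ but also shifts (by $x$) the hook lengths of the boxes in the first column that ``see'' those new boxes, and likewise for the column. One has to verify that these two effects are the ones that survive the cancellation and that everything else — in particular all hooks strictly inside the ``$\gamma$-part'' and inside any full $p$-rectangles — is genuinely unchanged between $\lambda$ and $\mu$. A clean way to organize this is via beta-sets / first-column hook lengths: write $\lambda$ and $\mu$ in terms of their sets of first-column hook lengths, observe these two beta-sets differ only in two elements (one of $\mu$'s is replaced by a larger value to form $\lambda$... wait, by a value shifted by $p$), and then the hook-length product is the product over all pairwise differences of beta-set elements, reducing the inequality to a transparent comparison. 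Either route ultimately rests on the single arithmetic fact that $a\leq w$ and $a\geq\lfloor(w+1)/2\rfloor+1$ together give $a>w-a$, so I would state that inequality explicitly early and then let the factor-pairing argument finish the proof, being careful about the edge cases where some arm length is $0$ (i.e. $a=w$, where $\lambda$'s first column has no $p$-part, or $\gamma$ is empty).
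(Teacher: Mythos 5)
Your overall strategy is the same as the paper's: apply the hook-length formula and compare $\pi(\lambda)=\prod h_{ij}(\lambda)$ with $\pi(\mu)$, noting that the corner hook $h_{11}(\gamma)+pw$ and all hooks at nodes $(i,j)$ of $\gamma$ with $i,j\geq 2$ are common to both diagrams. However, there is a genuine gap at the decisive step. Writing $\pi(\lambda)/\pi(\mu)=A\cdot B\cdot C$ with
\[
A=\prod_{i=1}^{p}\frac{(a-1)p+i}{(w-a)p+i},\qquad B=\prod_{i=2}^{\gamma_1}\frac{h_{1i}(\gamma)+ap}{h_{1i}(\gamma)+(a-1)p},\qquad C=\prod_{j=2}^{\ell(\gamma)}\frac{h_{j1}(\gamma)+(w-a)p}{h_{j1}(\gamma)+(w-a+1)p},
\]
your term-by-term comparison of the ``gained'' first-row hooks against the ``lost'' first-column hooks establishes only that $A>1$ (and $B\geq 1$ is clear). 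But the factor $C$, coming from the first-column nodes of $\gamma$ itself --- whose hooks \emph{decrease} by $p$ in passing from $\mu$ to $\lambda$ --- satisfies $C<1$ and works \emph{against} the desired inequality $\pi(\lambda)>\pi(\mu)$. You acknowledge in your last paragraph that these shifted hooks survive the cancellation, but you never explain how to beat them, and a crude bound does not suffice. The paper's key observation is that each $h_{j1}(\gamma)$ with $j\geq 2$ satisfies $1\leq h_{j1}(\gamma)<|\gamma|<p$, so that $\frac{(a-1)p+h_{j1}(\gamma)}{(w-a)p+h_{j1}(\gamma)}$ is literally one of the $p$ factors of $A$; pairing it with the corresponding factor of $C$ gives a product $\geq 1$ because $a-1\geq w-a+1$, and since $\ell(\gamma)-1<p$ at least one factor of $A$ remains unpaired and is $>1$. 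Without this pairing (or an equivalent estimate) the argument is incomplete.

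Two smaller points. First, your displayed ratio is upside down: since $\chi^\nu(1)=n!/\pi(\nu)$, the hooks gained by $\lambda$ sit in the \emph{denominator} of $\chi^\lambda(1)/\chi^\mu(1)$; moreover the gained row hooks $(a-1)p+1,\ldots,ap$ are \emph{larger} than the lost column hooks $(w-a)p+1,\ldots,(w-a+1)p$ (because $a-1\geq w-a+1$), not smaller as you assert. These two inversions happen to cancel, so your conclusion comes out right, but as written both the formula and the comparison are incorrect. Second, your beta-set reformulation is correct in spirit --- with a common beta-set size the two sets do differ in exactly two elements, shifted by $\pm p$ --- but the resulting product of pairwise differences involves elements of the common part lying on both sides of the two shifted entries, so the ``transparent comparison'' you hope for runs into exactly the same difficulty as the factor $C$ above and would still require the pairing idea.
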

\begin{proof}
For $\nu\vdash n$ we let $\pi(\nu):=\prod h_{ij}(\nu)$ be the product of the hook-lengths in $\nu$. From the hook length formula \cite[2.3.21]{JamesKerber} it follows that $\chi^\nu(1)\pi(\nu)=n!$. We let $h^i=h_{1i}(\gamma)$ and $h_j=h_{j1}(\gamma)$ for all $i\in\{1,\dots, \gamma_1\}$ and all $j\in\{1,\dots, \ell(\gamma)\}$.
It follows that \[\pi(\lambda)=(ap)!\cdot((w-a)p)!\cdot\prod_{i=2}^{\gamma_1}(h^i+ap)\cdot\prod_{i=2}^{\ell(\gamma)}(h_i+(w-a)p)\cdot \widehat{\gamma}\cdot (h_{11}(\gamma)+pw),\]
where $\widehat{\gamma}$ is the product of the hook lengths $h_{ij}(\gamma)$ for all $i,j\geq 2$. Similarly 
\[\pi(\mu)=((a-1)p)!\cdot((w-a+1)p)!\cdot\prod_{i=2}^{\gamma_1}(h^i+(a-1)p)\cdot\prod_{i=2}^{\ell(\gamma)}(h_i+(w-a+1)p)\cdot \widehat{\gamma}\cdot (h_{11}(\gamma)+pw).\]
It follows that $\pi(\lambda)/\pi(\mu)=A\cdot B\cdot C$, where 
\[A=\prod_{i=1}^{p}\frac{(a-1)p+i}{(w-a)p+i},\ \ B=\prod_{i=2}^{\gamma_1}\frac{h^i+ap}{h^i+(a-1)p},\ \ \text{and}\ \ 
C=\prod_{i=2}^{\ell(\gamma)}\frac{h_i+(w-a)p}{h_i+(w-a+1)p}.\]
We remark that we always regard empty products as equal to $1$.
We observe that $B\geq 1$. Since $a-1\geq w-a+1$ by hypothesis, it is clear that $A>1$. Hence, if $\ell(\gamma)=1$ then $C=1$ and clearly $A\cdot B\cdot C>1$. Suppose that $\ell(\gamma)\geq 2$. 
Then observe that $p>|\gamma|>h_2>h_3>\cdots>h_{\ell(\gamma)}\geq 1$. Hence for all $i\in\{2,\ldots, \ell(\gamma)\}$ we have that 
$\frac{(a-1)p+h_i}{(w-a)p+h_i}$ is one of the factors appearing in $A$. Moreover 
\[\frac{(a-1)p+h_i}{(w-a)p+h_i}\cdot \frac{h_i+(w-a)p}{h_i+(w-a+1)p}\geq 1,\] 
since $a-1\geq w-a+1$. 
We conclude that $A\cdot B\cdot C\geq A\cdot C>1$ and therefore that $\chi^\lambda(1)<\chi^\mu(1)$. 
\end{proof}

\begin{Def}\label{def: H-Omega}
Let $p$ be a prime and $n=wp+m$, for some $m<p$. Let  $\gamma$ be a $p$-core partition of $m$.
We let $H(n; \gamma)$ be the subset of $\mathcal{P}(n)$ defined by 
\[H(n;\gamma)=\{\lambda\vdash_{p'}n\ |\ \mathrm{C}_p(\lambda)=\gamma, \lambda=\gamma\star(a,n-m-a)\}.\]
We also set $\Omega(n; \gamma)=\{\lambda\in H(n;\gamma)\ |\ \lambda_1>(\lambda')_1\}$.
\end{Def}

\begin{Lem}\label{lem:alt3}
Let $n=\sum_{i=0}^ka_ip^{i}$ be the $p$-adic expansion of $n$, with $a_k\neq 0$. If $\gamma\vdash a_0$, then \[|\mathrm{cd}(\Omega(n; \gamma))|=|\Omega(n; \gamma)|\geq \lfloor \frac{a_{k}+1}{2}\rfloor\cdot\prod_{i=1}^{k-1}(a_i+1).\]
\end{Lem}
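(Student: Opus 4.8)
Here is the plan.

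The plan is to parametrize the members of $H(n;\gamma)$ by the length $a$ of the arm glued onto the first row of $\gamma$, to obtain the lower bound from an explicit sub-family in which $p\mid a$, and to deduce the equality $|\mathrm{cd}(\Omega(n;\gamma))|=|\Omega(n;\gamma)|$ from a monotonicity statement generalizing \autoref{lem:alt1}.

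Set $m:=a_0$ and $w:=(n-m)/p=\sum_{i=1}^{k}a_ip^{i-1}$, so that $w$ has base-$p$ digits $a_1,\dots,a_k$. If $\gamma=\emptyset$ then $n=pw$ and $H(n;\gamma)$ consists of the hook partitions $(a,1^{n-a})\vdash_{p'}n$, whose degrees are the binomial coefficients $\binom{n-1}{a-1}$; here the equality follows from the strict unimodality of $\binom{n-1}{\cdot}$ and the (non‑tight) lower bound from Kummer's theorem, so from now on assume $\gamma\ne\emptyset$. By \autoref{def: H-Omega} every element of $H(n;\gamma)$ equals $\lambda_a:=\gamma\star(a,wp-a)$ for a unique $0\le a\le wp$, and, as in the proof of \autoref{lem:alt1}, the hook lengths of $\lambda_a$ are the numbers $1,\dots,a$ and $1,\dots,wp-a$, the numbers $h_{1j}(\gamma)+a$ for $2\le j\le\gamma_1$, the numbers $h_{i1}(\gamma)+(wp-a)$ for $2\le i\le\ell(\gamma)$, the number $h_{11}(\gamma)+wp$, and the numbers $h_{ij}(\gamma)$ with $i,j\ge 2$; all hook lengths of $\gamma$ lie in $\{1,\dots,m\}\subseteq\{1,\dots,p-1\}$. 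Since $(\lambda_a)_1=\gamma_1+a$ and $(\lambda_a')_1=\ell(\gamma)+wp-a$, we have $\lambda_a\in\Omega(n;\gamma)$ iff $\lambda_a\in H(n;\gamma)$ and $2a>wp+\ell(\gamma)-\gamma_1$, and because $|\ell(\gamma)-\gamma_1|\le m<p$ this holds as soon as $\lambda_a\in H(n;\gamma)$ and $2a\ge wp+p$.

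For the lower bound I call $c$ with $0\le c\le w$ \emph{admissible} if each base-$p$ digit of $c$ is at most the corresponding digit of $w$; there are exactly $\prod_{i=1}^{k}(a_i+1)$ admissible integers, and $c\mapsto w-c$ is an involution of this set. I claim $\lambda_{cp}\in H(n;\gamma)$ for admissible $c$. Its $p$-core is $\gamma$, since removing in turn $c$ horizontal $p$-hooks from the right end of the first row and $w-c$ vertical $p$-hooks from the bottom of the first column reduces $\lambda_{cp}$ to the $p$-core $\gamma$. And $p\nmid\chi^{\lambda_{cp}}(1)$: by the hook-length formula this holds iff the product of the hook lengths of $\lambda_{cp}$ has the same $p$-adic valuation as $n!$, and by the description above together with Legendre's formula $v_p(N!)=(N-s_p(N))/(p-1)$ (with $s_p$ the base-$p$ digit sum) that valuation equals
\[v_p\bigl((cp)!\bigr)+v_p\bigl(((w-c)p)!\bigr)=\frac{wp-s_p(c)-s_p(w-c)}{p-1}=\frac{wp-s_p(w)}{p-1}=v_p(n!),\]
where one uses $p\nmid h_{1j}(\gamma)+cp$, $p\nmid h_{i1}(\gamma)+(w-c)p$, $p\nmid h_{11}(\gamma)+wp$, and that admissibility means no carry occurs in $c+(w-c)=w$. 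The involution $c\mapsto w-c$ interchanges the admissible integers $>w/2$ with those $<w/2$, so at least $\bigl\lfloor\tfrac12\prod_{i=1}^{k}(a_i+1)\bigr\rfloor$ of them exceed $w/2$; each such $c$ satisfies $2cp\ge wp+p$, hence $\lambda_{cp}\in\Omega(n;\gamma)$. Finally $\bigl\lfloor\tfrac12\prod_{i=1}^{k}(a_i+1)\bigr\rfloor\ge\bigl\lfloor\tfrac{a_k+1}{2}\bigr\rfloor\prod_{i=1}^{k-1}(a_i+1)$ (separating the two parities of $a_k+1$), which gives the asserted lower bound for $|\Omega(n;\gamma)|$.

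For the equality, since the members of $\Omega(n;\gamma)$ are among the $\lambda_a$ and $a\mapsto(\lambda_a)_1$ is strictly increasing, it is enough to prove $a\mapsto\chi^{\lambda_a}(1)$ strictly decreasing on $\{a:2a>wp+\ell(\gamma)-\gamma_1\}$. By the hook-length formula and the description above,
\[\frac{\chi^{\lambda_a}(1)}{\chi^{\lambda_{a+1}}(1)}=\frac{a+1}{wp-a}\cdot\prod_{j=2}^{\gamma_1}\frac{h_{1j}(\gamma)+a+1}{h_{1j}(\gamma)+a}\cdot\prod_{i=2}^{\ell(\gamma)}\frac{h_{i1}(\gamma)+(wp-a)-1}{h_{i1}(\gamma)+(wp-a)},\]
and on that range $a-(wp-a)\ge\ell(\gamma)-\gamma_1+1$. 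Using the classical identity that $\{h_{1j}(\gamma):1\le j\le\gamma_1\}$ and $\{h_{11}(\gamma)-h_{i1}(\gamma):2\le i\le\ell(\gamma)\}$ partition $\{1,\dots,h_{11}(\gamma)\}$, one rewrites this ratio and, by a short estimate in the spirit of the pairing step of \autoref{lem:alt1} (using $h_{1j}(\gamma),h_{i1}(\gamma)<p$), shows it is $>1$. Then $\chi^\lambda(1)$ is injective on $\Omega(n;\gamma)$, completing the proof.

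I expect the lower bound to be routine once the sub-family of hooks with $p\mid a$ is in hand. The hard part will be the final monotonicity: \autoref{lem:alt1} compares only diagrams whose arm length is a multiple of $p$, and only in steps of $p$ (where there is plenty of slack), whereas here consecutive arm lengths $a$ and $a+1$ must be compared with no divisibility hypothesis, so the inequality becomes tight — balancing the single factor $\tfrac{a+1}{wp-a}$ against the first-row hook contributions of $\gamma$ is the delicate point.
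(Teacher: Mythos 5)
Your lower-bound half is correct and is essentially the paper's own argument: you parametrize $H(n;\gamma)$ by the arm length, observe that membership forces the arm to be $cp$ with the base-$p$ digits of $c$ bounded by those of $w$ (the paper quotes \autoref{lem: last layer} where you redo this via Legendre's formula, which is fine), and count those $c$ that are large enough to force $\lambda_1>(\lambda')_1$; your counting via the involution $c\mapsto w-c$ gives the same bound. The gap is in the equality $|\mathrm{cd}(\Omega(n;\gamma))|=|\Omega(n;\gamma)|$. There you replace the paper's appeal to \autoref{lem:alt1} by the claim that $a\mapsto\chi^{\lambda_a}(1)$ is strictly decreasing in steps of $1$ on the range $2a>wp+\ell(\gamma)-\gamma_1$, and you dispose of the crucial inequality with ``a short estimate in the spirit of the pairing step of \autoref{lem:alt1} shows it is $>1$''. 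That sentence is not a proof: as you yourself note, the ratio $\frac{a+1}{wp-a}\prod_j(\cdots)\prod_i(\cdots)$ is tight near the boundary of the range (for $\gamma=(1,1)$ or $\gamma=(2,1)$ it collapses to $\frac{a+1}{wp-a+1}$ and $\frac{a+2}{wp-a+1}$ respectively, exceeding $1$ only marginally), so the ``short estimate'' is exactly the content that is missing. The claim does appear to be true, but it is not established in your write-up.

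Moreover the detour is unnecessary. You have already shown that every member of $H(n;\gamma)$, hence of $\Omega(n;\gamma)$, has arm length a multiple of $p$. So injectivity of $\lambda\mapsto\chi^\lambda(1)$ on $\Omega(n;\gamma)$ only requires comparing $\gamma\star(ap,(w-a)p)$ with $\gamma\star((a-1)p,(w-a+1)p)$ for $\lfloor\frac{w+1}{2}\rfloor+1\le a\le w$, which is verbatim \autoref{lem:alt1}; since $|\ell(\gamma)-\gamma_1|<p$, every element of $\Omega(n;\gamma)$ has $c\ge\lfloor\frac{w+1}{2}\rfloor$, so the chain of strict inequalities from \autoref{lem:alt1} already separates all of their degrees. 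This is precisely what the paper does. Replacing your final paragraph by this observation closes the gap; alternatively, you would need to actually carry out the step-$1$ estimate, which is delicate for no gain.
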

\begin{proof}
Let $\lambda=\gamma\star(x,n-a_0-x)$, for some $0\leq x\leq n-a_0$. Let $x=\sum_{i=0}^tb_ip^i$ be the $p$-adic expansion of $x$. 
By definition of $H(n; \gamma)$ we have that $\lambda\in H(n; \gamma)$ if and only if $\lambda\vdash_{p'}n$ and $C_p(\lambda)=\gamma$. In turn, this is equivalent to have that $p$ divides $x$ (and $n-a_0-x$) so that $C_p(\lambda)=\gamma$ and 
by \autoref{lem: last layer} to have that 
$b_0=0$ and $0\leq b_i\leq a_i$ for all $i\geq 1$. It follows that $|\mathcal{H}(n; \gamma)|=\prod_{i=1}^k(a_i+1)$. Moreover, if $b_k\geq \lfloor a_k/2\rfloor+1$, then certainly $\lambda_1>(\lambda')_1$ and therefore $\lambda\in\Omega(n;\gamma)$. It follows that 
\[|\Omega(n; \gamma)|\geq \lfloor \frac{a_{k}+1}{2}\rfloor\cdot\prod_{i=1}^{k-1}(a_i+1).\]
We conclude by observing that \autoref{lem:alt1} implies that given $\lambda,\mu\in\Omega(n;\gamma)$ we have that $\chi^\lambda(1)\neq \chi^\mu(1)$ and hence that $|\mathrm{cd}(\Omega(n; \gamma))|=|\Omega(n; \gamma)|$.
\end{proof}

Given $\lambda\in\Omega(n;\gamma)$ we have that $\chi^\lambda$ lies in $B(\mathfrak{S}_n; \gamma)$ and that $(\chi^\lambda)_{\mathfrak{A}_n}$ is irreducible and lies in $B(n; \gamma)$. As explained in Notation \ref{not: an} above, $\mathrm{cd}_{p'}^{ext}(B(n; \gamma))$ denotes the set of degrees of irreducible characters of $B(n; \gamma)$ of degree coprime to $p$ that extend to $B(\mathfrak{S}_n; \gamma)$. 

In the following proposition we are able to establish a lower bound for the number of extendable $p'$-character degrees lying in any given $p$-block of $\mathfrak{A}_n$. We believe this statement of independent interest from the topic of this article. 

\begin{Prop}\label{cor: an}
Let $n=\sum_{i=0}^ka_ip^{i}$ be the $p$-adic expansion of $n$, with $a_k\neq 0$. Let $\gamma\vdash a_0$, then \[|\mathrm{cd}_{p'}^{ext}(B(n; \gamma))|\geq \lfloor \frac{a_{k}+1}{2}\rfloor\cdot\prod_{i=1}^{k-1}(a_i+1).\]
\end{Prop}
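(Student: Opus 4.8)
The plan is to deduce \autoref{cor: an} almost immediately from \autoref{lem:alt3} together with the relationship between characters of $\mathfrak{S}_n$ and $\mathfrak{A}_n$ recalled earlier in this section. First I would observe that every $\lambda\in\Omega(n;\gamma)$ satisfies $\lambda_1>(\lambda')_1$, so in particular $\lambda\ne\lambda'$; hence by \cite[2.5.7]{JamesKerber} the restriction $\psi^\lambda=(\chi^\lambda)_{\mathfrak{A}_n}$ is irreducible and $\chi^\lambda$ is one of its two extensions to $\mathfrak{S}_n$. Moreover $\psi^\lambda(1)=\chi^\lambda(1)$ is coprime to $p$ since $\lambda\vdash_{p'}n$, and $\psi^\lambda$ lies in the block $B(n;\gamma)$ by the block-labeling discussion (as $\mathrm{C}_p(\lambda)=\gamma$), while $\chi^\lambda$ lies in $B(\mathfrak{S}_n;\gamma)$. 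Therefore $\chi^\lambda(1)\in\mathrm{cd}_{p'}^{ext}(B(n;\gamma))$ for every $\lambda\in\Omega(n;\gamma)$, which shows $\mathrm{cd}(\Omega(n;\gamma))\subseteq\mathrm{cd}_{p'}^{ext}(B(n;\gamma))$.

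Next I would combine this inclusion with the lower bound from \autoref{lem:alt3}:
\[
|\mathrm{cd}_{p'}^{ext}(B(n;\gamma))|\ge|\mathrm{cd}(\Omega(n;\gamma))|=|\Omega(n;\gamma)|\ge\Bigl\lfloor\frac{a_k+1}{2}\Bigr\rfloor\cdot\prod_{i=1}^{k-1}(a_i+1),
\]
which is exactly the claimed inequality. The injectivity of $\lambda\mapsto\chi^\lambda(1)$ on $\Omega(n;\gamma)$ — the content of the equality $|\mathrm{cd}(\Omega(n;\gamma))|=|\Omega(n;\gamma)|$ — is precisely where \autoref{lem:alt1} (via \autoref{lem:alt3}) does the real work, so no further computation is needed here.

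The only subtlety to watch is the degenerate case $a_k=1$, where $\lfloor(a_k+1)/2\rfloor=1$ and the bound merely asserts $\mathrm{cd}_{p'}^{ext}(B(n;\gamma))$ is nonempty; one should make sure $\Omega(n;\gamma)$ is genuinely nonempty in that case, but this already follows from \autoref{lem:alt3} (the partition with $b_k=1$ and all other $b_i=0$, i.e.\ $x=p^k$, lies in $\Omega(n;\gamma)$ since then $\lambda_1\ge\gamma_1+p^k>(\lambda')_1$). Thus the main obstacle is not in this proposition at all — it has been front-loaded into \autoref{lem:alt1} and \autoref{lem:alt3}; the proof of \autoref{cor: an} itself is a short bookkeeping argument translating the $\mathfrak{S}_n$-side count into the $\mathfrak{A}_n$-side statement. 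I would write it in two or three sentences accordingly.
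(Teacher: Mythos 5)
Your proposal is correct and follows essentially the same route as the paper: the paper's proof likewise notes that every $\lambda\in\Omega(n;\gamma)$ yields a $p'$-degree character of $B(n;\gamma)$ extending to $\chi^\lambda$ in $B(\mathfrak{S}_n;\gamma)$ and then cites \autoref{lem:alt3} for the count. Your extra verification that $\lambda\ne\lambda'$ (so restriction is irreducible) and your remark on the case $a_k=1$ are fine but are exactly the bookkeeping the paper leaves implicit.
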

\begin{proof}
By definition, for every partition $\lambda\in\Omega(n;\gamma)$ we have that $(\chi^\lambda)_{\mathfrak{A}_n}$ is a $p'$-degree character that lies in $B(n; \gamma)$ and extends to $\chi^\lambda$ in $B(\mathfrak{S}_n; \gamma)$. The statement now follows from \autoref{lem:alt3}.
\end{proof}

\begin{Prop}\label{prop:alt}
Let $n\geq 5$ be a natural number and $p>3$ be a prime. Then \autoref{simple} holds for $\mathfrak{A}_n$. 
In particular if $n\geq 7$ then $|\mathrm{cd}_{p'}^{ext}(B_0(\mathfrak{A}_n))|\geq 3$.
\end{Prop}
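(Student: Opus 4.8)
The plan is to peel off the two genuinely exceptional groups $\mathfrak{A}_5,\mathfrak{A}_6$ and to reduce the whole range $n\ge 7$ to the degree count asserted in the ``In particular'' clause. Throughout one uses three facts. First, $\Aut(\mathfrak{A}_n)=\mathfrak{S}_n$ for $n\ne 6$. Second, the $p$-core of $(n)$ is the single-row partition $(a_0)$ with $a_0<p$ (successively strip $p$-hooks off the one row), so $B_0(\mathfrak{A}_n)=B(n;(a_0))$ and $B_0(\mathfrak{S}_n)=B(\mathfrak{S}_n;(a_0))$. Third — the key simplification for alternating groups — every $p$-element of $\mathfrak{S}_n$ has odd order and is therefore an even permutation, so a Sylow $p$-subgroup of $\Aut(\mathfrak{A}_n)$ always lies inside $\mathfrak{A}_n$ (for $n=6$ this follows from $[\Aut(\mathfrak{A}_6):\mathfrak{A}_6]=4$). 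Consequently any $\beta\in\Irr(\mathfrak{A}_n)$ is automatically invariant under such a Sylow $p$-subgroup, and the only substantive requirements in \autoref{simple}(i) are that $\alpha\ne 1\ne\beta$ have $p'$-degree and lie in $B_0(\mathfrak{A}_n)$, that $\alpha=\psi^\lambda$ for a non-self-conjugate $\lambda$ (so that one of $\chi^\lambda,\chi^{\lambda'}$ lies in $B_0(\mathfrak{S}_n)$ and extends $\alpha$ into the principal block, as observed after Notation~\ref{not: an}), and that $\beta(1)\nmid\alpha(1)$.

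Hence, for $n\ne 6$, \autoref{simple}(i) for $\mathfrak{A}_n$ reduces to a single task: produce two non-self-conjugate partitions $\lambda,\mu$ of $n$ with $C_p(\lambda)=C_p(\mu)=(a_0)$, $\lambda,\mu\vdash_{p'}n$, and $1<\chi^\lambda(1)<\chi^\mu(1)$; then $\alpha:=\psi^\lambda$, $\beta:=\psi^\mu$ do the job (the inequality $\chi^\mu(1)>\chi^\lambda(1)$ forces $\beta(1)\nmid\alpha(1)$), and since the trivial character contributes the degree $1$ we also get $|\mathrm{cd}_{p'}^{ext}(B_0(\mathfrak{A}_n))|\ge 3$. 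So for $n\ge 7$ the two assertions of the proposition collapse into this one existence statement. If $\lfloor\tfrac{a_k+1}{2}\rfloor\prod_{i=1}^{k-1}(a_i+1)\ge 3$, the statement is immediate from \autoref{cor: an} with $\gamma=(a_0)$: \autoref{lem:alt3} then yields at least three distinct degrees in $\mathrm{cd}(\Omega(n;(a_0)))\subseteq\mathrm{cd}_{p'}^{ext}(B(n;(a_0)))$, at most one of which is $1$, so two of them serve as $\chi^\lambda(1),\chi^\mu(1)$. A short arithmetic check shows that this already covers all $n$ outside a bounded-digit range: principally $p\le n<5p$ (where $B_0$ has weight $a_1\in\{1,2,3,4\}$) together with a sparse family at higher $p$-powers (small leading digit $a_k$, essentially vanishing intermediate digits).

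For those remaining $n$ I would argue directly, exhibiting three partitions of $n$ with $p$-core $(a_0)$: the trivial one $(n)$ (degree $1$), and two further ones obtained from $(a_0)$ by attaching border strips of length divisible by $p$ — for instance a hook-type partition $(a_0+xp,1^{(w-x)p})$ and a partition with a longer second row such as $(a_0+(w-1)p,\,p)$ or a suitable relative thereof, chosen so that the $p$-core is still $(a_0)$. For each candidate, membership in $B_0$ and the $p'$-property follow from \autoref{lem: last layer} (recall that for $k=1$ \emph{every} irreducible character of $B_0(\mathfrak{S}_n)$ has $p'$-degree, since then the only condition in that lemma is that the $p$-core be $(a_0)$), non-self-conjugacy is read off by inspection (discarding the finitely many coincidences such as $n=2p+1$), and the three degrees are pairwise distinct by the hook-length formula, the needed monotonicity being exactly of the kind established in the proof of \autoref{lem:alt1}. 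Finally, $n=5$ is settled by taking $\alpha=\psi^{(4,1)}$ (degree $4$) and $\beta$ a constituent of $(\chi^{(3,1,1)})_{\mathfrak{A}_5}$ (degree $3$), and $n=6$ — the one case where the symmetric-group machinery fails, as $\Aut(\mathfrak{A}_6)\cong\mathrm{P\Gamma L}_2(9)$ — is verified from the character tables of $\mathfrak{A}_6$ and its almost-simple overgroups $\mathfrak{S}_6$, $\mathrm{PGL}_2(9)$, $M_{10}$, $\mathrm{P\Gamma L}_2(9)$: the (unique, hence $\Aut$-invariant) degree-$9$ character of $\mathfrak{A}_6$ lies in $B_0(\mathfrak{A}_6)$ and, having trivial Schur obstruction over each cyclic or biquadratic quotient, extends into the principal block of every overgroup, so it is $\alpha$, while a degree-$8$ constituent of $(\chi^{(3,2,1)})_{\mathfrak{A}_6}$ is $\beta$.

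The step I expect to be the main obstacle is the low-rank range $p\le n<5p$: one must pin down, uniformly in $p$, in the residue $a_0$, and for each weight $w\in\{1,2,3,4\}$ simultaneously, three partitions of $n$ in the principal block of pairwise distinct $p'$-degrees and none self-conjugate, and then verify the degree inequalities — essentially a carefully organised extension of the hook-length bookkeeping behind \autoref{lem:alt1}. The finite $n=6$ verification is the secondary delicate point, precisely because none of the partition combinatorics is available for $\Aut(\mathfrak{A}_6)$.
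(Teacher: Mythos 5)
Your architecture coincides with the paper's: the observation that a Sylow $p$-subgroup of $\mathfrak{S}_n$ lies in $\mathfrak{A}_n$ (so Sylow-invariance is automatic and only extendability and degrees matter), the appeal to \autoref{cor: an} when its lower bound is at least $3$, and a direct treatment of $n=5,6$. The genuine gap is that you leave the decisive case --- those $n$ for which the bound of \autoref{cor: an} is $1$ or $2$, i.e. the case $k\le 2$ --- as a plan rather than a proof. You propose candidates $(a_0+xp,1^{(w-x)p})$ and $(a_0+(w-1)p,\,p)$ ``or a suitable relative thereof,'' to be checked weight-by-weight for $w\in\{1,2,3,4\}$ plus a sparse family at higher $p$-powers, and you yourself flag this as the main obstacle. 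As written, the second family does not even parse when $w=1$ (it needs $a_0+(w-1)p\ge p$, i.e. $a_0\ge p$), and none of the required verifications ($p$-core equal to $(a_0)$, the $|\mathcal{H}^{p^k}|$-condition of \autoref{lem: last layer} when $k=2$, non-self-conjugacy, distinctness of the three degrees) is carried out. Since this is exactly where the content of the proposition lies once \autoref{cor: an} is available, the argument is incomplete.

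The paper closes this case with two uniform devices that you do not have. If $a_0\le 1$: any $\lambda\vdash_{p'}n$ lies in a block whose core has size congruent to $n$ modulo $p$ and smaller than $p$, hence of size $a_0$; for $a_0\le 1$ there is only one such core, so $\Irr_{p'}(B_0(\mathfrak{A}_n))=\Irr_{p'}(\mathfrak{A}_n)$ and one simply quotes \cite[Proposition~3.5]{GRS}. If $a_0\ge 2$: the single pair $\lambda=(a_0,1^{n-a_0})$, $\mu=(a_0,2,1^{n-a_0-2})$ works for every remaining $n$ simultaneously --- both have $p$-core $(a_0)$ and $p'$-degree by \autoref{lem: last layer}, both restrict irreducibly and extend into $B_0(\mathfrak{S}_n)$, and $1<\chi^\lambda(1)<\chi^\mu(1)$ by the hook-length formula --- so no case division over the weight and no analogue of \autoref{lem:alt1} is needed there. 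Your small cases are essentially fine: for $n=5$ the degrees $4$ and $3$ do the job, and for $n=6$, $p=5$ the degree-$9$ and degree-$8$ characters you name do lie in $B_0(\mathfrak{A}_6)$; the extension of the former into the principal block of each group between $\mathfrak{A}_6$ and $\mathrm{P\Gamma L}_2(9)$ is the finite check that the paper likewise disposes of by direct verification.
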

\begin{proof}
Direct verification proves that \autoref{simple} holds for $\mathfrak{A}_5$ and $\mathfrak{A}_6$. 
Suppose that $n\geq 7$ and that $n=a_0+\sum_{i=1}^ka_ip^{n_i}$ is the $p$-adic expansion of $n$, with $a_i\neq 0$ for all $i\geq 1$ and with $n_1<n_2<\cdots<n_k$. Since $p$ is odd, for $P\in\mathrm{Syl}_p(\mathfrak{S}_n)$ we have that $P\leq \mathfrak{A}_n$ and hence that all irreducible characters in $B_0(\mathfrak{A}_n)$ are $P$-invariant. Thus we just need to show that $|\mathrm{cd}_{p'}^{ext}(B_0(\mathfrak{A}_n))|\geq 3$.
From \autoref{cor: an}, we deduce that $|\mathrm{cd}_{p'}^{ext}(B_0(\mathfrak{A}_n))|\geq 3$, whenever $k\geq 3$.   
Suppose that $k\leq 2$. 
If $a_0\leq 1$ then
$\mathrm{Irr}_{p'}(B_0(\mathfrak{A}_n))=\mathrm{Irr}_{p'}(\mathfrak{A}_n)$ and 
 the statement follows from \cite[Proposition 3.5]{GRS}. Hence we can assume that $a_0\geq 2$ and consider $\lambda, \mu\in \mathcal{P}(n)$ to be defined as follows. 
\begin{align*}
\lambda=(a_0, 1^{n-a_0}),&& \text{and}&& \mu=(a_0, 2,1^{n-a_0-2}).
\end{align*}
It is clear that both $(\chi^\lambda)_{\mathfrak{A}_n}$ and $(\chi^\mu)_{\mathfrak{A}_n}$ lie in the principal $p$-block of $\mathfrak{A}_n$ and extend to the principal $p$-block of $\mathfrak{S}_n$, to $\chi^\lambda$ and $\chi^\mu$ respectively. Moreover $\lambda$ and $\mu$ label characters of degree coprime to $p$ by \autoref{lem: last layer}. 
Using the hook-length formula we verify that $1=\chi^{(n)}(1)<\chi^\lambda(1)<\chi^\mu(1).$ The proof is complete. 
\end{proof}

\section{Sporadic groups and groups of Lie type}

\begin{Prop}\label{prop:sporadic}
\autoref{simple} holds for all sporadic simple groups $S$ and the Tits group $\tw{2}F_4(2)'$.
\end{Prop}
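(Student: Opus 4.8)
The plan is to verify \autoref{simple} for the finitely many sporadic simple groups and the Tits group by a direct, computer-assisted inspection of their character tables, which are available in \textsf{GAP} together with the block distribution and the action of the outer automorphism group. For each such $S$, the relevant data are finite and explicit: the degrees $\chi(1)$ of irreducible characters, the partition of $\Irr(S)$ into $p$-blocks for every prime $p \ge 5$ dividing $|S|$, and the permutation action of $\Out(S)$ on $\Irr(S)$. Since none of these groups is $P\Omega_8^+(q)$, only part~(i) of \autoref{simple} is needed, so the task reduces to exhibiting, for each $S$ and each relevant $p$, two nontrivial characters $\alpha,\beta \in \Irr(S)$ of $p'$-degree with $\beta$ in the principal block, $\beta(1) \nmid \alpha(1)$, and suitable extendability/invariance properties.

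First I would dispose of the invariance condition on $\beta$ and the extendability condition on $\alpha$ as cheaply as possible. For every sporadic group except those few with $\Out(S) \ne 1$ (namely those with $\Out(S) = C_2$, plus the Tits group with $\Out = C_2$), we have $\Aut(S) = S$, so a Sylow $p$-subgroup of $\Aut(S)$ lies in $S$, every character of $S$ is automatically invariant, and every character extends trivially to $T = S$; moreover the principal block is automatically preserved. Thus for these groups it suffices to find $\alpha \ne 1 \ne \beta$ of $p'$-degree with $\beta \in \Irr(B_0)$ and $\beta(1) \nmid \alpha(1)$. When $\Out(S) = C_2$ one must additionally check that the chosen $\beta$ is fixed by the outer automorphism (equivalently, lies in a singleton $\Out(S)$-orbit, or one simply picks such a $\beta$ from the \textsf{GAP} character table library where the action is recorded) and that $\alpha$ extends to $\Aut(S)$ in the principal block; extendability of $\alpha$ to $\Aut(S)$ follows once $\alpha$ is $\Aut(S)$-invariant since $\Out(S)$ is cyclic (see \cite[Corollary~11.22]{Isaacs}), and landing in the principal block of $T$ is automatic because the principal block of $T$ covers the principal block of $S$ and $\alpha$ lies in $B_0(S)$ --- so in fact one just needs $\alpha \in \Irr(B_0(S))$ to be $\Aut(S)$-invariant of $p'$-degree.

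The core of the argument is then the combinatorial claim that, for each of the finitely many pairs $(S,p)$ with $p \ge 5$ and $p \mid |S|$, the principal $p$-block of $S$ contains at least two nontrivial $\Aut(S)$-invariant characters of $p'$-degree whose degrees do not divide one another. I would organize this as a finite check: run over the list of sporadic groups and the Tits group, and for each over the primes $p \in \{5,7,11,13,\dots\}$ dividing the order, extract $\Irr_{p'}(B_0(S))$ from the character table, restrict to $\Aut(S)$-fixed characters, and confirm that the multiset of their degrees has two elements $a,b$ with $b \nmid a$ and $a \ne b$ (after which one assigns the larger-in-the-divisibility-sense role appropriately: take $\beta$ with $\beta(1) \nmid \alpha(1)$). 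In essentially all cases the principal block at $p \ge 5$ has full or near-full defect, hence a large supply of $p'$-height-zero characters with many distinct degrees, so the condition $\beta(1) \nmid \alpha(1)$ is easy to meet. The only delicate cases are the handful of pairs $(S,p)$ where $p \mid |S|$ exactly once and the principal block is a Brauer tree algebra: then $\Irr_{p'}(B_0)$ consists of the exceptional character (if nonreal, a Galois orbit) together with the non-exceptional vertices of the tree, and one must confirm that among these there are still two nontrivial ones with the non-divisibility property; this is again a finite, explicit check from the known Brauer trees.

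The main obstacle I anticipate is purely one of bookkeeping rather than mathematical depth: ensuring completeness over all sporadic groups and all relevant primes, and correctly incorporating the $\Aut(S)$-action in the dozen or so cases where $\Out(S) \ne 1$. There is no conceptual difficulty --- the statement is a finite disjunction of verifiable facts about known character tables --- but the proof must be careful to state that the verification was carried out (with \textsf{GAP}, say) and to flag explicitly any small group or small prime where the generic "plenty of $p'$-degrees" heuristic fails, handling those by hand using the Brauer tree or the explicit character degree list. I would therefore present the proof as: "This is verified directly from the known character tables and block distributions (available in \textsf{GAP}~\cite{GAP}), using that $\Aut(S)=S$ for all sporadic groups with trivial outer automorphism group and handling the remaining cases together with the Tits group individually," followed by a remark pinpointing the Brauer-tree cases.
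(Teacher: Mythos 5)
Your proposal is correct and follows essentially the same route as the paper: a finite \textsf{GAP} verification over all pairs $(S,p)$, using $|\Aut(S):S|\le 2$ so that $\alpha$ can be obtained by restricting a $p'$-character from $B_0(\Aut(S))$ (equivalently, extending an invariant one) and so that the $P$-invariance of $\beta$ is automatic because $p\ge 5$ forces Sylow $p$-subgroups of $\Aut(S)$ into $S$. Your additional requirement that $\beta$ be $\Out(S)$-fixed is stronger than what \autoref{simple} asks for, but harmless.
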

\begin{proof}
Recall that $|\Aut(S):S|\le 2$. Hence, we may take a $p'$-character $\hat\alpha$ in the principal block of $\Aut(S)$ such that $\alpha:=\hat\alpha_S\ne 1$ is irreducible. For $\beta$ we can choose any non-trivial $p'$-character in the principal block of $S$. 
Now it can be checked with GAP~\cite{GAP48} that there are choices such that $\beta(1)\nmid\alpha(1)$.
\end{proof}

Now we consider simple groups $S$ of Lie type, by which we mean groups of the form $G/\Z(G)$, where $G=\bg{G}^F$ is the set of fixed points of a simple simply connected algebraic group under a Steinberg morphism $F$.  In the case where $\Z(G)$ is trivial, we define $\wt{{G}}={G}$, and otherwise we let $\bg{G}\hookrightarrow\wt{\bg{G}}$ be a regular embedding, as in \cite[Section 15]{CE04}, so that $\Z(\wt{\bg{G}})$ is connected, $[\wt{\bg{G}}, \wt{\bg{G}}]=[\bg{G}, \bg{G}]$, and $G$ is normal in $\wt{G}:=\wt{\bg{G}}^F$. We write $\wt{S}$ for the group $\wt{G}/\Z(\wt{G})$, so $\Aut(S)$ may be viewed as generated by $\wt{S}$ and graph and field automorphisms. 

Recall that the set $\irr{\wt{G}}$ can be partitioned into so-called Lusztig series $\mathcal{E}(\wt{G},s)$, where $s$ is a semisimple element of the dual group $\wt{G}^\ast$, up to conjugacy. Each series $\mathcal{E}(\wt{G},s)$ has a unique character of degree $[\wt{G}^\ast: C_{\wt{G}^\ast}(s)]_{q'}$, where $\F_{q}$ is the field over which $G$ is defined, called a semisimple character.
Further, the characters in the series $\mathcal{E}(\wt{G},1)$ are called unipotent characters, and for a prime $p$, any $p$-block containing a unipotent character is called a unipotent block.  
 
When $\bg{G}$ is type $A_{n-1}$ (that is, in the case of linear and unitary groups), we will use the notation $PSL_n^\epsilon(q)$ to denote $PSL_n(q)$ for $\epsilon=1$ and $PSU_n(q)$ for $\epsilon=-1$, and similar for $GL_n^\epsilon(q)$ and $SL_n^\epsilon(q)$.  Similarly, $A_{n-1}^\epsilon(q)$ will denote the untwisted case $A_{n-1}(q)$ when $\epsilon=1$ and the twisted case $\tw{2}A_{n-1}(q)$ when $\epsilon=-1$.  We also remark that the group $P\Omega^+_{2n}(q)$ corresponds to $D_n(q)$ and $P\Omega^-_{2n}(q)$ corresponds to $\tw{2}D_n(q)$. 

The following result settles \autoref{simple} for most simple groups in defining characteristic.

\begin{Prop}\label{prop:defininggen}
Let $S$ be a simple group of Lie type defined over $\F_q$, where $q$ is a power of $p>3$ not in the following list: 
$PSL_2(q)$, $PSL_3^\epsilon(q)$, or $PSp_4(q)$.  Then there exist two non-trivial characters $\chi_1, \chi_2\in\irrp {B_0(S)}$ such that $\chi_1(1)\neq\chi_2(1)$ and:
\begin{itemize}
\item If $S\neq P\Omega_8^+(q)$, then for every $S\leq T\leq \Aut(S)$, each of $\chi_1$ and $\chi_2$ extend to a character in the principal $p$-block of $T$.
\item If $S=P\Omega_8^+(q)$, then $\chi_1(1)>2\chi_2(1)$ and for every $S\leq T\leq \Aut(S)$, for $i=1, 2$, there exist $\wh{\chi}_i$  in the principal $p$-block of $T$ such that $\wh{\chi}_i|_S\in\{\chi_i, 2\chi_i\}$. 
\end{itemize}
\end{Prop}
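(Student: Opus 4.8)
The plan is to exploit the rich supply of $p'$-characters available in defining characteristic, where $p$ is the defining prime. First I would recall the standard fact (essentially due to the theory of characters of reductive groups in defining characteristic) that in the principal $p$-block $B_0(S)$ — which, for $S$ of Lie type in defining characteristic, contains \emph{all} irreducible characters of $p'$-degree apart from the Steinberg character and its relatives, since the Sylow $p$-subgroup is a maximal unipotent subgroup and Brauer's theory together with work of Humphreys/Cabanes shows that the principal block is the unique block of full defect containing the unipotent characters of $p'$-degree — there are many $p'$-characters. Concretely, the semisimple characters of $\wt G$ have $p'$-degree $[\wt G^\ast : C_{\wt G^\ast}(s)]_{p'} = [\wt G^\ast : C_{\wt G^\ast}(s)]$ (since the index is prime to $p$ in defining characteristic), and by restriction these give $p'$-characters of $S$. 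The key point is that distinct semisimple classes $s$ typically give distinct degrees, and one can pick two nontrivial semisimple classes whose centralizers are proper Levi-type subgroups of different orders, yielding $\chi_1(1)\neq\chi_2(1)$.

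Second, I would handle the extension property. The outer automorphism group of $S$ is generated by diagonal automorphisms (realized inside $\wt S$), field automorphisms, and graph automorphisms. Semisimple characters of $\wt G$ restrict irreducibly to $G$ provided the relevant component group acts trivially, and in any case one uses the fact that semisimple characters with $C_{\wt G^\ast}(s)$ connected restrict to a sum of $\wt G/G$-conjugates; choosing $s$ with connected centralizer (always possible over $\F_q$ since $q$ is large and $\gcd$-conditions can be arranged) makes the restriction to $G$ behave well. For field and graph automorphisms, a semisimple character is stable under the corresponding automorphism exactly when its parameter $s$ is, so I would choose $s_1, s_2$ to be rational semisimple elements fixed by the full automorphism group (e.g. elements of small order, or regular semisimple elements in a maximally split torus of prescribed rational type), guaranteeing that the characters are $\Aut(S)$-invariant and hence, by the theory of extensions to $\Aut(S)$ in defining characteristic (see e.g.\ \cite{CE04} and the Malle--Navarro theory of block-stable extensions), extend to every $T$ with $S\le T\le\Aut(S)$, and that the extension lies in the principal block by Brauer's third main theorem since the principal block of $T$ is the unique one containing the trivial character and covering $B_0(S)$.

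Third, the special case $S = P\Omega_8^+(q)$ requires the extra estimate $\chi_1(1) > 2\chi_2(1)$ and only the weaker conclusion about extensions up to a factor $2$, because of the triality graph automorphism of order $3$ and the diagonal automorphisms: here a semisimple or unipotent $p'$-character need not be $\Aut(S)$-stable, but its $\Aut(S)$-orbit sum restricted to $T$ is controlled. I would pick $\chi_1$ to be a semisimple character attached to a large centralizer (say of type $A_1(q)^3$ or a torus) and $\chi_2$ a much smaller one, so that the ratio of degrees — computed from $[\wt G^\ast : C(s)]$ — easily exceeds $2$; the factor-of-$2$ slack in $\wh\chi_i|_S \in \{\chi_i, 2\chi_i\}$ absorbs the possible non-triviality of stabilizers in $\wt S/S$ (which has order dividing $4$). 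The main obstacle I anticipate is bookkeeping: verifying simultaneously across \emph{all} the infinite families (types $B_n, C_n, D_n, {}^2D_n, {}^3D_4, G_2, F_4, E_6, {}^2E_6, E_7, E_8$, and $A_n^\epsilon$ for $n\ge 3$) that one can choose two rational semisimple parameters with the required invariance, distinct-degree, and (for $D_4$) ratio properties, while staying inside the principal block — this is where one must split into cases by Lie type and invoke explicit degree formulas and the classification of semisimple classes fixed by field and graph automorphisms; the small-rank exclusions $PSL_2(q), PSL_3^\epsilon(q), PSp_4(q)$ are precisely the cases where too few semisimple classes with the needed properties exist and which are therefore postponed to a separate argument.
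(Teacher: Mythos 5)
Your plan follows essentially the same route as the paper: the paper likewise takes the two semisimple characters already constructed in the proof of [GRS, Proposition~4.3] (restricting irreducibly to $S$, extending to $\Aut(S)$, with distinct degrees), observes that in defining characteristic $\Irr_{p'}(\wt{G})=\Irr_{p'}(B_0(\wt{G}))$ so that these characters automatically lie in the principal block, and then uses the abelianness of $\Aut(S)/\wt{S}$ (respectively its $\mathfrak{S}_3\times C$ structure for $P\Omega_8^+(q)$) to produce extensions (respectively characters restricting to $\chi_i$ or $2\chi_i$) inside $B_0(T)$. Two small corrections to your sketch: for $P\Omega_8^+(q)$ the character whose semisimple parameter has centralizer of type $A_1^3\times T_2$ must be taken as $\chi_2$, the one of \emph{smaller} degree, in order to get $\chi_1(1)>2\chi_2(1)$ (as written you have the labels reversed); and $[\wt{G}^\ast:C_{\wt{G}^\ast}(s)]$ is in general divisible by $p$ even in defining characteristic --- the semisimple character degree is its $p'$-part, which is all that is needed.
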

\begin{proof}
In the proof of \cite[Proposition 4.3]{GRS}, it is shown that there exist two characters $\chi_1, \chi_2\in\irrp {\wt{S}}$ that restrict irreducibly to $S$, extend to characters of $\Aut(S)$, have different degrees, and are obtained from characters of $\wt{G}$ trivial on $\Z(\wt{G})$.  Now, since $\irrp {\wt{G}}=\irrp{B_0(\wt{G})}$ (using, for example, \cite[6.18, 6.14, 6.15]{CE04}) and using \cite[Lemma 17.2]{CE04}, we see that in fact these characters are members of the principal block of $\wt{S}$, and their restrictions are members of the principal block of $S$.  

Now, let $S\leq T\leq \Aut(S)$.  Then for $i=1,2$, $\chi_i|_{T\cap \wt{S}}$ is in the principal block of $T\cap \wt{S}$, since $B_0(\wt{S})$ covers a unique block of $T\cap\wt{S}$. Note that by \cite[Theorem 9.4]{Navarro}, there must be a character of $B_0(T)$ lying above $\chi_i|_{T\cap\wt{S}}$.  If $S\neq P\Omega_8^+(q)$, we have $\Aut(S)/\wt{S}$ is abelian, and hence every character of $T$ lying above $\chi_i|_{T\cap\wt{S}}$ is an extension, completing the proof in this case.

If $S=P\Omega_8^+(q)$, then $\Aut(S)/\wt{S}$ is of the form $\mathfrak{S}_3\times C$, where $C$ is cyclic. Then the character $\wh{\chi}_i$ in $B_0(T)$ lying above $\chi_i|_{T\cap\wt{S}}$ must be such that $\wh{\chi}_i|_S\in\{\chi_i, 2\chi_i\}$, as desired.  Switching the roles of the semisimple elements $s_1$ and $s_2$ constructed in \cite[Proposition 4.3]{GRS}, we further see that the characters have been constructed to satisfy $\chi_1(1)>2\chi_2(1)$, since the centralizers of $s_1$ and $s_2$ have types $A_1 \times T_1$ and $A_1^3 \times T_2$ with $T_1$ and $T_2$ appropriate tori, and $2|C_{{G}^\ast}(s_1)|_{p'}<|C_{{G}^\ast}(s_2)|_{p'}$.  \end{proof}

The following handles the exceptional cases left by \autoref{prop:defininggen}.

\begin{Prop}\label{prop:definingexceptions}
Let $S$ be one of $PSL_2(q)$, $PSL_3^\epsilon(q)$, or $PSp_4(q)$, where $q$ is a power of a prime $p>3$.  Then there exist two non-trivial characters $\chi_1, \chi_2\in\irrp {B_0(S)}$ such that $\chi_2(1)\nmid\chi_1(1)$; $\chi_2$ is invariant under a Sylow $p$-subgroup of $\Aut(S)$; and for every $S\leq T\leq \Aut(S)$, $\chi_1$ extends to a character in the principal $p$-block of $T$.
\end{Prop}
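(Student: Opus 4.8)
The plan is to reduce the assertion, exactly as in the proof of \autoref{prop:defininggen}, to a character-theoretic statement about $S$ alone, and then to verify that statement separately for each of the three families using their (completely known) generic character tables. Since $q$ is a power of $p$ we are in defining characteristic, so $\irrp{\wt G}=\irrp{B_0(\wt G)}$ as in the proof of \autoref{prop:defininggen}, and hence --- via \cite[Lemma~17.2]{CE04} together with the fact that $p$ does not divide $[\wt S:S]$ --- also $\irrp{\wt S}=\irrp{B_0(\wt S)}$ and $\irrp S=\irrp{B_0(S)}$. It therefore suffices to produce non-trivial $\chi_1,\chi_2\in\irrp S$ with $\chi_2(1)\nmid\chi_1(1)$ such that $\chi_2$ is invariant under a Sylow $p$-subgroup $P$ of $\Aut(S)$, and $\chi_1=\wt{\chi_1}|_S$ for some $\wt{\chi_1}\in\irrp{\wt S}$ that restricts irreducibly to $S$ and extends to $\Aut(S)$: for such a $\chi_1$ and any $S\le T\le\Aut(S)$, the restriction $\wt{\chi_1}|_{T\cap\wt S}$ extends to $T$, every irreducible character of $T$ lying above it is then an extension because $\Aut(S)/\wt S$ is abelian for each of the three families, and \cite[Theorem~9.4]{Navarro} supplies one such extension inside $B_0(T)$.

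For $\chi_1$ I would take a semisimple character of $\wt G$ that is trivial on $\Z(\wt G)$ --- hence descends to $\wt S$ --- associated to a semisimple element $s_1$ of the dual group with connected centraliser and with $\wt G^\ast$-conjugacy class stabilised by the graph and field automorphisms; such a character restricts irreducibly to $S$, is $\Aut(S)$-invariant, and for these explicitly known small groups extends to $\Aut(S)$. Concretely this is a principal-series character: of degree $q+1$ for $PSL_2(q)$ (with $s_1$ a regular element of the split torus lying in $\FF_p^\times$), with the analogous Weyl-fixed-torus choices, read off the generic tables, for $PSL_3^\epsilon(q)$ and $PSp_4(q)$. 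The reason these families are treated by hand is that in their low rank one cannot in general find two such semisimple characters of distinct degrees --- for instance the degree-$(q-1)$ semisimple character of $PSL_2(q)$ need not extend past the field automorphisms --- so the requirement on $\chi_2$ must be weakened to mere $P$-invariance.

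For $\chi_2$ the only constraint is $P$-invariance. As $p\ge 5$ divides neither $[\wt S:S]$ nor the order $2$ of a graph automorphism, $P\wt S/\wt S$ is cyclic, generated by a field automorphism $\sigma$ of order $p^a$, where $p^a$ is the $p$-part of $f$ (with $q=p^f$); in particular, if $p\nmid f$ then $\sigma=1$ and $\chi_2$ may be any non-trivial $p'$-degree character whose degree does not divide $\chi_1(1)$. In general I would take $\chi_2$ to be (a constituent of the restriction to $S$ of) a discrete-series character of $\wt S$ attached to a character of order $p^{f/p^a}+1$ of a non-split maximal torus: this order is coprime to $p$ and divides $q+1$, because $p^f\equiv-1\pmod{p^{f/p^a}+1}$ as $p^a$ is odd, and the character is $\sigma$-fixed, so $\chi_2$ is $P$-invariant. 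For $PSL_2(q)$ this produces $\chi_2$ of degree $q-1$, with $q-1\nmid q+1$ for $q\ge 5$; for $PSL_3^\epsilon(q)$ and $PSp_4(q)$ the various maximal tori furnish analogous $P$-invariant $p'$-degree characters whose degrees do not divide $\chi_1(1)$, as one checks from the generic tables.

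The bulk of the proof is then this family-by-family verification, and I expect the main subtleties to be the simultaneous bookkeeping: arranging that $\chi_1|_S$ is irreducible and that $\chi_1$ survives \emph{all} automorphisms --- in particular the graph automorphism for $PSL_3^\epsilon(q)$, which acts on semisimple characters essentially by inverting the semisimple element --- while $\chi_2$ stays fixed by $\sigma$ and $\chi_2(1)\nmid\chi_1(1)$ still holds. Finally, a few small parameters fall outside the generic pattern and should be dispatched directly or with GAP~\cite{GAP48}, most notably $PSL_2(5)\cong\mathfrak{A}_5$, which is already covered by \autoref{prop:alt}.
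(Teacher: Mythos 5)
Your overall strategy is exactly the paper's: in defining characteristic every $p'$-degree character of $\wt G$ (hence of $\wt S$ and of $S$) lies in the principal block, so the whole content is to produce the two characters with the stated invariance/extension properties, and the extension of $\chi_1$ into $B_0(T)$ for all $S\le T\le\Aut(S)$ follows from irreducibility of $\wt\chi_1|_S$, the abelianness of $\Aut(S)/\wt S$, and \cite[Theorem~9.4]{Navarro} --- this is word for word the second paragraph of \autoref{prop:defininggen}, which is also how the paper argues. The only difference is that the paper does not rebuild the characters: it simply observes that the $\chi_1,\chi_2$ already constructed in the proof of \cite[Lemma~4.4]{GRS} have every required property except the block-membership of the extension, and then runs the argument above. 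Your proposal instead re-derives the characters, which is where the risk sits.

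Two of your explicit prescriptions are, taken literally, wrong, and both fail precisely at $p=5$ (so they are not absorbed by your ``small parameters'' disclaimer, since $q=5^f$ with $f\ge 2$ is not sporadic). First, for $PSL_2(q)$ you take $s_1=\diag(a,a^{-1})$ with $a\in\FF_p^\times$; irreducibility of the restriction to $S$ forces $a^2\ne\pm1$, but every $a\in\FF_5^\times$ satisfies $a^2=\pm1$, so for $p=5$ no such $a$ exists and one must take the eigenvalue in a larger subfield whose (unordered) eigenvalue pair is still Frobenius-stable (e.g.\ $a$ of order $3$ when $3\mid q-1$), or switch to the non-split torus. Second, your torus character of order $p^{f/p^a}+1$ is of \emph{even} order, hence non-trivial on the central involution of the torus of $SL_2(q)$, so the degree-$(q-1)$ character it defines does not descend to $PSL_2(q)$; you need order $(p^{f/p^a}+1)/2$, or you must genuinely work in $PGL_2(q)$ and accept that the restriction to $S$ may split into two constituents of degree $(q-1)/2$ (which still works, but then the divisibility check $\chi_2(1)\nmid\chi_1(1)$ and the $P$-invariance of the individual constituents have to be redone, using that $|P\wt S/\wt S|$ is odd). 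Both slips are repairable, and the analogous bookkeeping for $PSL_3^\epsilon(q)$ and $PSp_4(q)$, which you defer entirely, is exactly what \cite[Lemma~4.4]{GRS} supplies; citing it, as the paper does, is the clean way to finish.
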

\begin{proof}
In this case, characters $\chi_1$ and $\chi_2$ are constructed in the proof of \cite[Lemma 4.4]{GRS} that satisfy all of the needed properties, except possibly the property that for every $S\leq T\leq \Aut(S)$, $\chi_1$ extends to a character in the principal block of $T$.  However, $\chi_1$ is again constructed from a character of $\wt{G}$ trivial on $\Z(\wt{G})$ that restricts irreducibly to $G$.  Hence since again $\Aut(S)/\wt{S}$ is abelian, the proof is complete arguing as in the second paragraph of \autoref{prop:defininggen}.
\end{proof}

For the remainder of the section, we consider the case of non-defining characteristic.  That is, we assume $p>3$ is a prime and $S$ is a simple group of Lie type defined in characteristic different than $p$.  

\begin{Prop}\label{prop:crossgen}
Let $p>3$ be a prime and let $S$ be a simple group of Lie type defined over $\F_q$, where $q$ is a power of a prime different than $p$ and $S$ is not in the following list: 
$PSL_2(q)$, $PSL_3^\epsilon(q)$ with $p\mid (q+\epsilon)$,  $\tw{2}B_2(2^{2a+1})$ with $p\mid (2^{2a+1}-1)$, or $\tw{2}G_2(3^{2a+1})$ with $p\mid (3^{2a+1}-1)$.  Then there exist two non-trivial characters $\chi_1, \chi_2\in\irrp {B_0(S)}$ such that $\chi_1(1)\neq\chi_2(1)$ and:
\begin{itemize}
\item If $S\neq P\Omega_8^+(q)$, then for every $S\leq T\leq \Aut(S)$, each of $\chi_1$ and $\chi_2$ extend to a character in the principal $p$-block of $T$.
\item If $S=P\Omega_8^+(q)$, then $\chi_1(1)>2\chi_2(1)$ and for every $S\leq T\leq \Aut(S)$, for $i=1, 2$, there exist $\wh{\chi}_i$  in the principal $p$-block of $T$ such that $\wh{\chi}_i|_S\in\{\chi_i, 2\chi_i\}$. 
\end{itemize}
\end{Prop}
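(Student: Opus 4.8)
The strategy is to reduce to the group $\wt{S}$ (equivalently, to $\wt{G}=\wt{\bg{G}}^F$) and to realize the two characters via semisimple characters in the principal $p$-block, mirroring the defining-characteristic argument of \autoref{prop:defininggen}. Concretely, the plan is: (i) produce two non-conjugate semisimple elements $s_1, s_2$ in the dual group $\wt{G}^\ast$ whose orders are prime to $p$ and whose centralizers lie in $B_0$ (i.e. the associated semisimple characters lie in the principal $p$-block), such that the corresponding semisimple characters $\chi_i^{\wt{G}}$ have $p'$-degree $[\wt{G}^\ast : C_{\wt{G}^\ast}(s_i)]_{q'}$, restrict irreducibly to $G$, are trivial on $\Z(\wt{\bg{G}})^F$, and have distinct degrees; (ii) deduce that their restrictions $\chi_i\in\irrp{B_0(S)}$ are non-trivial and of distinct degrees; (iii) propagate to every $S\le T\le\Aut(S)$ exactly as in \autoref{prop:defininggen}, using that $B_0(\wt S)$ covers a unique block of $T\cap\wt S$, that by \cite[Theorem 9.4]{Navarro} there is a character of $B_0(T)$ over each $\chi_i|_{T\cap\wt S}$, and that this forces an actual extension when $\Aut(S)/\wt S$ is abelian, while for $P\Omega_8^+(q)$ one only gets $\wh\chi_i|_S\in\{\chi_i,2\chi_i\}$ because $\Aut(S)/\wt S\cong\mathfrak{S}_3\times C$.

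The technical heart is step~(i): choosing $s_1,s_2$. For the principal block condition, by the $d$-Harish-Chandra/Bonnafé--Rouquier description of blocks in non-defining characteristic (or the Cabanes--Enguehard theory, \cite[Chapter 9, 17]{CE04}), a semisimple character $\chi_{s}^{\wt G}$ lies in $B_0(\wt G)$ precisely when $s$ is a $p'$-element lying in (a conjugate of) a minimal $d$-split Levi containing a Sylow $d$-torus, where $d=d_p(q)$ is the order of $q$ modulo $p$; the cleanest sufficient condition is that $C_{\wt{G}^\ast}(s)$ contains a full Sylow $\Phi_d$-torus of $\wt{G}^\ast$, equivalently $|\wt{G}^\ast:C_{\wt{G}^\ast}(s)|_p=1$ together with $s$ being a $p$-regular element whose centralizer meets $B_0$. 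In practice I would take $s_1, s_2$ to be small semisimple elements with centralizers of the form $(\text{small rank group})\times(\text{torus containing the Sylow }\Phi_d)$, e.g. centralizers of type $A_1\Phi_d^{a-1}$ versus $A_1^3\Phi_d^{a-3}$ in classical types (paralleling the $A_1\times T_1$ vs.\ $A_1^3\times T_2$ choice in \autoref{prop:defininggen}), chosen so that $2|C_{G^\ast}(s_1)|_{p'}<|C_{G^\ast}(s_2)|_{p'}$ in the $D_4$ case and simply $\chi_1(1)\ne\chi_2(1)$ otherwise.

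The restriction-irreducibility and triviality-on-the-center conditions are the standard ones: a semisimple character of $\wt G$ is trivial on $\Z(\wt{\bg G})^F$ iff $s$ lies in $[\wt{\bg G}^\ast,\wt{\bg G}^\ast]$ (so $\chi_s^{\wt G}$ factors through $\wt G/\Z(\wt G)$ and we may regard it on $\wt S$), and it restricts irreducibly to $G$ iff $C_{\wt{\bg G}^\ast}(s)$ is connected and $s$ is not fixed under a nontrivial diagonal automorphism — conditions one arranges by taking $s$ regular enough in its position or by invoking the counts already used in \cite[Proposition 4.3]{GRS}. Extendability to $\Aut(S)$ of semisimple characters trivial on the center is handled by the same references on stability of semisimple characters under graph and field automorphisms. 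The excluded groups ($PSL_2(q)$; $PSL_3^\epsilon(q)$ with $p\mid q+\epsilon$; the small Suzuki and Ree groups with $p$ dividing the relevant torus order) are exactly those of too small rank for the $A_1$ versus $A_1^3$ construction to make sense or where the Sylow $d$-torus forces $C_{\wt{G}^\ast}(s)$ too large; these will be treated separately in a subsequent proposition, just as the defining-characteristic exceptions were in \autoref{prop:definingexceptions}.

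The step I expect to be the main obstacle is verifying, uniformly across the classical and exceptional types and all residues $d=d_p(q)\in\{1,\dots,\text{rank}\}$, that one can place $s_1,s_2$ so that their centralizers simultaneously (a) contain a Sylow $\Phi_d$-torus of $\wt{\bg G}^\ast$ (for the principal-block membership, via Brauer's third main theorem / the Bonnafé--Rouquier block bijection), (b) are connected with the right component structure to get irreducible restriction to $G$ and distinct (resp.\ factor-of-$2$-separated) degrees, and (c) consist of $p'$-elements; the bookkeeping for twisted types ${}^2A$, ${}^2D$, ${}^3D_4$, ${}^2E_6$ and for the interaction of $d$ with the twisting is where the case analysis is most delicate, and where one most wants to reuse, verbatim where possible, the element constructions already carried out in \cite[Propositions 4.3 and Lemma 4.4]{GRS}.
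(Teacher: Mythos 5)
Your overall architecture for step (iii) --- construct two characters of $\wt{G}$ trivial on the centre that restrict irreducibly to $G$, then pass to $S$ and to every $S\le T\le\Aut(S)$ using that $B_0(\wt{S})$ covers a unique block of $T\cap\wt{S}$ and that $\Aut(S)/\wt{S}$ is abelian except for $P\Omega_8^+(q)$ --- matches the paper exactly. But the core of your construction, step (i), rests on a false block-theoretic premise, and this is a genuine gap rather than a fixable detail. In non-defining characteristic the Brou\'e--Michel theorem says that for a semisimple $p'$-element $s$ of $\wt{G}^\ast$ the union $\mathcal{E}_p(\wt{G},s)=\bigcup_t\mathcal{E}(\wt{G},st)$ (with $t$ running over the $p$-elements of $C_{\wt{G}^\ast}(s)$) is a union of $p$-blocks, and the principal block is contained in $\mathcal{E}_p(\wt{G},1)$. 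Consequently, for a \emph{non-trivial} $p'$-element $s$ the semisimple character $\chi_s$ lies in $\mathcal{E}_p(\wt{G},s)$, which is disjoint from $\mathcal{E}_p(\wt{G},1)$, so $\chi_s$ can never lie in $B_0(\wt{G})$ --- no matter how the centralizer sits relative to a Sylow $\Phi_d$-torus. Your stated criterion (``$\chi_s$ lies in $B_0$ precisely when $s$ is a $p'$-element lying in a minimal $d$-split Levi containing a Sylow $d$-torus'') has the roles of $p$- and $p'$-elements reversed: to land in a unipotent block one needs $s$ to be a $p$-\emph{element}, which is exactly how the paper invokes \cite[Theorem~9.12]{CE04} in \autoref{prop:crossexceptions}. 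You appear to have imported the logic of the defining-characteristic case, where $\irrp{\wt{G}}=\irrp{B_0(\wt{G})}$ makes every $p'$-degree semisimple character automatically principal; that identity fails completely in cross characteristic, and the element constructions of \cite[Proposition 4.3]{GRS} that you propose to reuse belong to the defining-characteristic argument, not to this one.

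The paper instead works with \emph{unipotent} characters throughout this proposition: they are trivial on $\Z(\wt{G})$ and restrict irreducibly to $G$ for free; their distribution into blocks is controlled combinatorially ($e$-cores and $e$-cocores of the labelling partitions and symbols for classical types via Fong--Srinivasan, $d$-Harish-Chandra series via Brou\'e--Malle--Michel and Enguehard for exceptional types); and their extendability to $\Aut(S)$ is given by Malle's theorems in \cite{malle08}. The real work is then a case analysis producing, in each type and for each value of $e$, two explicit unipotent labels with the same $e$-core (resp.\ $e$-cocore) as the trivial character and with distinct $p'$-degrees; this is what Tables~\ref{unipsA}--\ref{unips2D} record, together with the $\chi_1(1)>2\chi_2(1)$ check for $D_4$. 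If you insist on Lusztig series you would have to take $s_1,s_2$ to be $p$-elements, but then their centralizers necessarily contain a Sylow $\Phi_d$-torus and you lose the freedom to prescribe component types such as $A_1$ versus $A_1^3$; that route is used in the paper only for the low-rank exceptions of \autoref{prop:crossexceptions}, where a unique unipotent block of maximal defect pins down the principal block.
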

\begin{proof}
We adapt our proof of \cite[Proposition 4.5]{GRS}, ensuring that we may choose unipotent characters of $p'$-degree satisfying the principal block conditions required here.  That is, we will exhibit unipotent characters of $\wt{G}$ with different degree (and in the case of $P\Omega_8^+(q)$, satisfying $\chi_1(1)>2\chi_2(1)$) that are contained in $\irrp{B_0(\wt{G})}$, which as unipotent characters must be trivial on $\Z(\wt{G})$ and restrict irreducibly to $G$.  Then the restriction lies in $B_0(G)$, since $B_0(\wt{G})$ covers a unique block of $G$, and by \cite[Lemma 17.2]{CE04}, the resulting characters of $\wt{S}$ and $S=G/\Z(G)$ also lie in the principal blocks.  By \cite[Theorems 2.4 and 2.5]{malle08}, every unipotent character extends to its inertia group in $\Aut(S)$, and except for some specifically stated exceptions, the inertia group is all of $\Aut(S)$.  Then arguing as in \autoref{prop:defininggen}, the required properties will hold for each $S\leq T\leq\Aut(S)$.  

To see that the unipotent characters exhibited are indeed of $p'$-degree, it will often be useful to recall that $q^s-1=\prod_{m\mid s}\Phi_m$ and note that $p\mid \Phi_m$ if and only if $m=dp^i$ for some non-negative integer $i$, where $\Phi_m$ denotes the $m$-th cyclotomic polynomial in $q$ and $d$ is the order of $q$ modulo $p$.  Further, $p^2$ divides $\Phi_m$ only if $m=d$. (This is \cite[Lemma 5.2]{malle07}.) 

First, we consider groups of exceptional type.  If $S$ is one of $\tw{2}G_2(3^{2a+1})$ or $\tw{2}B_2(2^{2a+1})$ but not one of the exceptions of the statement, then the unipotent characters mentioned in the proof of \cite[Proposition 4.5]{GRS} work here, since by \cite[Proposition 3.2]{HissbrauerRee}, respectively  \cite[Section 2]{burkhardtSz}, there is a unique unipotent block of maximal defect.  If $S$ is $\tw{2}F_4(2^{2a+1})$, then by \cite[Bemerkung 1]{Malle2F4}, there is again a unique unipotent block of maximal defect unless $p\mid (2^{2a+1}-1)$,  in which case the principal block contains the Steinberg character and two more unipotent characters of $p'$-degree.  Hence we are also done in this case.  If $S=\tw{3}D_4(q)$, then there is either a unique unipotent block of maximal defect or the principal block contains the Steinberg character and one other unipotent character of $p'$-degree, using \cite[Propositions 5.6 and 5.8]{deriziotismichler87}, so we are similarly finished in this case.  

Now let $S$ be one of $G_2(q), F_4(q), E_6(q), \tw{2}E_6(q), E_7(q),$ or $E_8(q)$.  Let $d$ be the order of $q$ modulo $p$.  Using \cite[Theorem A]{enguehard00}, we have the unipotent blocks of $\wt{G}$ are indexed by conjugacy classes of pairs  $(L, \lambda)$ for $L$ a $d$-split Levi subgroup and $\lambda$ a $d$-cuspidal unipotent character.  In particular, the characters in the $d$-Harish-Chandra series indexed by such an $(L, \lambda)$ all lie in the same block of $\wt{G}$.  Further, \cite[Corollary 6.6]{malle07} then yields that if a unipotent character in the series indexed by $(L, \lambda)$ has $p'$-degree, then $L$ is the centralizer of a Sylow $d$-torus.  Now, using this and \cite[Theorem 5.1]{BMM}, we see that either such an $L$ is a maximal torus (yielding a unique block containing unipotent characters of $p'$ degree, and hence we are done using \cite[Proposition 4.5]{GRS} again) or we may use the decompositions in \cite[Table 2]{BMM} to see there are at least two non-trivial unipotent characters in the principal block with different degrees relatively prime to $p$.  (For an example of the argument in the latter situation, consider $E_8(q)$ in the case $d=7$.  Then Line 58 of \cite[Table 2]{BMM} shows that the trivial character and the unipotent characters $\ph_{8, 91}$ and $\ph_{400, 7}$  in the notation of \cite[Section 13.9]{carter}, which have degree $q^{91}\Phi_4^2\Phi_8\Phi_{12}\Phi_{20}\Phi_{24}$ and $\frac{1}{2}q^6\Phi_2^4\Phi_5^2\Phi_6^2\Phi_8\Phi_{10}^2\Phi_{14}\Phi_{15}\Phi_{18}\Phi_{20}\Phi_{24}\Phi_{30}$, respectively, lie in the same $d$-Harish-Chandra series, and hence the same block.  Since $p\mid \Phi_7$ and $p\neq 2$, we see these two non-trivial character degrees are $p'$ and distinct.)

%\smallskip

We are left to consider the classical groups, in which case the unipotent characters of $\wt{G}$ are parametrized by certain partitions or symbols.  By a symbol of rank $n$, we mean a pair of partitions $ {\lambda_1\hbox{ } \lambda_2\hbox{ } \cdots\hbox{ } \lambda_a} \choose {\mu_1\hbox{ } \mu_2\hbox{ } \cdots\hbox{ } \mu_b}$ $ = {\lambda\choose\mu}$, where $\lambda_1<\lambda_2<\cdots<\lambda_a$, $\mu_1<\mu_2<\cdots<\mu_b$, $\lambda_1$ and $\mu_1$ are not both $0$, and $n=\sum_i \lambda_i+\sum_j \mu_j - \lfloor{\left(\frac{a+b-1}{2}\right)^2}\rfloor$.  (The symbol $\lambda\choose\mu$ is equivalent to $\mu\choose\lambda$, and if $\lambda_1$ and $\mu_1$ are both $0$, the symbol is equivalent to $ \lambda_2-1\hbox{ } \cdots\hbox{ } \lambda_a-1 \choose  \mu_2-1\hbox{ } \cdots\hbox{ } \mu_b-1$.)  The defect of a symbol is $|b-a|$. Given an integer $e$, an $e$-hook is a pair of non-negative integers $(x, y)$ with $y-x=e$, $x\not\in\lambda$ (resp. $\mu$), and $y\in \lambda$ (resp. $\mu$).  The $e$-core of a symbol is obtained by successively removing $e$-hooks, which means replacing $y$ by $x$ in $\lambda$ (resp. $\mu$) and then replacing the result with an equivalent symbol satisfying that $\lambda_1$ and $\mu_1$ are not both $0$. An $e$-cohook is defined similarly, except that $x\not\in\lambda$ and $y\in\mu$ (or $x\not\in\mu$ and $y\in\lambda$), and the $e$-cocore is obtained by removing $e$-cohooks, which means removing $y$ from $\mu$ and adding $x$ to $\lambda$ (resp. removing $y$ from $\lambda$ and adding $x$ to $\mu$), and again replacing the result with an equivalent symbol satisfying that $\lambda_1$ and $\mu_1$ are not both $0$.

Tables \ref{unipsA} through \ref{unips2D} describe two unipotent characters for each classical type satisfying the properties described in the first paragraph and not in the list of exceptions of \cite[Theorem 2.5]{malle08}.  For each type, we include a brief discussion, but  we remark that a more complete description of the degrees of such characters and the partitions and symbols can be found in \cite[Section 13.8]{carter}, and a more complete discussion of their distribution into blocks may be found in \cite{fongsrinivason, fongsrinivason89}.  We will include the details for type $A_{n-1}$ in this respect, and note that the other types have similar arguments.

\textbf{Types $A_{n-1}$ and $\tw{2}A_{n-1}$}.
Here $\wt{G}=GL_n^\epsilon(q)$.  In this case, let $e$ be the order of $\epsilon q$ modulo $p$.  The unipotent characters are in bijection with partitions of $n$, and two such characters are in the same block if and only if they have the same $e$-core.  In particular, the trivial character is given by the partition $(n)$, which has $e$-core $(r)$, where $0\leq r< e$ is the remainder when $n:=me+r$ is divided by $e$. Table \ref{unipsA} lists the desired unipotent characters in this case when $n\geq 4$.  Indeed, consider the case $\epsilon=1$.  The partitions listed have $e$-core $(r)$, and hence the corresponding characters are in the principal block and it suffices to show that they have $p'$-degree. Since $p\nmid q$, we need only consider the part of the degree relatively prime to $q$, which are listed following \cite[Section 13.8]{carter}.  If $e=1$, then since $p>3$, the character $\chi_1$ in the cases of line 1 or line 2 has $p'$-degree, since $(q^d-1)/(q-1)$ is divisible by $p$ in this case if and only if $d$ is divisible by $p$. Hence, for $\chi_1$, we may assume $e\neq 1$. Consider line 3 of Table \ref{unipsA} in this case.  Since $me+k$ is not divisible by $e$ for $1\leq k<e$, we see $(q^{me+k}-1)$ contains no factors of the form $\Phi_{ep^i}$.  Hence we see $(q^{me+1}-1)\cdots(q^n-1)$ is not divisible by $p$.  Similarly, if $r+1\neq e$, then $(q^{me-r-1}-1)$ is not divisible by $p$. If $r+1=e$, then $(q^{me-e}-1)/(q^e-1)$ is divisible by $p$ only if $p\mid (m-1)$, so that $(q^{me-e}-1)$ has factors of the form $\Phi_{ep^i}$ with $i\geq 1$.  Hence the character listed in line 3 has $p'$-degree, given the stated conditions, and similar for lines 6 and 7.  Line 5 refers to the Steinberg character, which is certainly of $p'$-degree.  So, consider the characters in lines 4 and 8, of degree $\prod_{i=1}^e\frac{q^{n-i}-1}{q^i-1}$, with $p\mid (m-1)$.  If $p$ divides $\prod_{i=1}^e\frac{q^{n-i}-1}{q^i-1}$, then $p\mid (q^{n-r}-1)/(q^e-1) = (q^{me}-1)/(q^e-1)$, and hence $p\mid m$, a contradiction.   The argument is similar in the case $\epsilon=-1$.   

Finally, if $n=3$ and $p\nmid (q+\epsilon)$, then note that $e=1$ or $3$, $r<2$, and the characters listed in Table \ref{unipsA} still satisfy our conditions.  (In this case, the two characters are the Steinberg character and the unipotent character of degree $q(q+\epsilon)$.)

%\smallskip

\textbf{Types $B_{n}$ and $C_n$}. Here the unipotent characters of $\wt{G}$ are in bijection with symbols of rank $n$ and odd defect.  In this case, we let $e$ be the order of $q^2$ modulo $p$.  Then two symbols are in the same block if and only if they have the same $e$-core, respectively $e$-cocore, if $p\mid q^e-1$, respectively $p\mid q^e+1$.   The trivial character is represented by the symbol $n \choose \emptyset$, which has $e$-core and $e$-cocore $r\choose\emptyset$, where $0\leq r< e$ is the remainder when $n:=me+r$ is divided by $e$.  Table \ref{unipsBC} lists the desired unipotent characters in this case, as long as $n\neq 2$ or $q$ is not an odd power of $2$.  When $n=2$ and $q$ is an odd power of $2$, we have $e=1$ or $2$, so we may still take the Steinberg character for $\chi_2$, but the the characters listed for $\chi_1$ are not necessarily fixed by the exceptional graph automorphism (see \cite[Theorem 2.5(c)]{malle08}).  Here we may instead take the character indexed by $0\hbox{ } 2\choose 1$ of degree $(q+1)^2/2$ when $p\mid (q-1)$, and otherwise we use the character of degree $(q-1)^2/2$ indexed by $0\hbox{ } 1\hbox{ } 2\choose \emptyset$.

%\smallskip

\textbf{Type $D_{n}$ and $\tw{2}D_{n}$}.
In this case the unipotent characters of $\wt{G}$ are in bijection with symbols of rank $n$ and defect $0\pmod 4$, respectively $2\pmod 4$ in case $D_n$, respectively $\tw{2}D_n$.  Again, let $e$ be the order of $q^2$ modulo $p$, and let $n=me+r$ where $0\leq r< e$ is the remainder when $n$ is divided by $e$. The block distribution is described the same way as for types $B_n$ and $C_n$.  

For type $D_n(q)$, the trivial character is represented by the symbol $n \choose 0$, which has $e$-core $r\choose 0 $ if $e\nmid n$ and $\emptyset \choose \emptyset$ if $e\mid n$.  It has $e$-cocore $r\choose 0$ if $m$ is even and $e\nmid n$; $0 \hbox{ } r \choose \emptyset$ if $m$ is odd and $e\nmid n$; $\emptyset \choose \emptyset$ if $m$ is even and $e\mid n$; and $e\choose 0$ if $m$ is odd and $e\mid n$.  Table \ref{unipsD} lists the desired unipotent characters as long as $n\geq 5$. (In some cases, more than two characters are listed.)  We remark that if $n=e$, then it must be that $p\mid (q^e-1)$. 
 
For $D_4(q)=P\Omega_8^+(q)$, note that $1\leq e\leq 3$ and that $p\mid (q^2+1)$ when $e=2$.  Then the Steinberg character of degree $q^{12}$, labeled by $0 \hbox{ } 1 \hbox{ } 2\hbox{ } 3 \choose 1 \hbox{ } 2\hbox{ } 3\hbox{ } 4$ may be taken for $\chi_1$.  For $\chi_2$, we take the character labeled by $3\choose 1$, of degree $q(q^2+1)^2$ when $e=1$ or $3$, and $1\hbox{ } 3\choose 0\hbox{ } 2$ of degree $\frac{1}{2}q^3(q+1)^3(q^3+1)$ when $e=2$.  In either case, we have $\chi_1(1)>2\chi_2(1)$.
 
For type $\tw{2}D_n(q)$, the trivial character is represented by the symbol $0\hbox{ }  n \choose \emptyset$, which has $e$-core $0 \hbox{ }  r \choose \emptyset$ when $e\nmid n$ and $0 \hbox{ } e \choose \emptyset$ if $e\mid n$.  The $e$-cocore is $0 \hbox{ }  r \choose \emptyset$ if $e\nmid n$ and $m$ is even, $r\choose 0$ if $e\nmid n$ and $m$ is odd, $e\choose 0$ if $e\mid n$ and $m$ is even, and $\emptyset\choose\emptyset$ if $e\mid n$ and $m$ is odd.  Table \ref{unips2D} lists the desired unipotent characters in this case.
\end{proof}

\begin{table}\footnotesize
\caption{Some unipotent characters in $\irrp  {B_0(S)}$ for type $A^\epsilon_{n-1}(q)$ with $n\geq 4$ and $p\nmid q$}\label{unipsA}
\begin{tabular}{|c|c|c|c|}
\hline
 & Additional condition on& \multirow{2}{*}{Partition} & \multirow{2}{*}{$\chi(1)_{q'}$}\\
 & $n=me+r$, $r<e$ & & \\
\hline \hline
\multirow{4}{*}{$\chi_1$} & $e=1$ and $p\mid (n-1)$ & $(2, n-2)$ & $\frac{(q^n-\epsilon^n)(q^{n-3}-\epsilon^{n-3})}{(q-\epsilon)(q^2-1)}$ \\
\cline{2-4}
& $e=1$ and $p\nmid (n-1)$ & $(1, n-1)$ & $\frac{q^{n-1}-\epsilon^{n-1}}{q-\epsilon}$ \\
\cline{2-4}
 & $1\neq e\neq r+1$ or $p\nmid (m-1)$ & $(r+1, me-1)$ & 
%$\frac{(q^{me+1}-1)(q^{me+2}-1)\cdots(q^n-1)(q^{me-r-1}-1)}{(q-1)(q^2-1)\cdots(q^{r+1}-1)}$ \\
%&&& 
$\frac{(q^{me+1}-\epsilon^{me+1})(q^{me+2}-\epsilon^{me+2})\cdots(q^n-\epsilon^n)(q^{me-r-1}-\epsilon^{me-r-1})}{(q-\epsilon)(q^2-1)\cdots(q^{r+1}-\epsilon^{r+1})}$\\
\cline{2-4}
& $1\neq e=r+1$ and $p\mid (m-1)$& $(1^{r+1}, me-1)$ & $\prod_{i=1}^{e}\frac{(q^{n-i}-\epsilon^{n-i})}{(q^{i}-\epsilon^i)}$ \\

\hline
\multirow{4}{*}{$\chi_2$} & $r<2$ & $(1^n)$ & 1\\
\cline{2-4}
& $r\geq 2$, $e\neq r+1$ or $m\geq 2$, and & & \\
& $e\neq r+2$ or $p\nmid(m-1)$ & {$(1, r+1, me-2)$} &{$\frac{(q^{me+1}-\epsilon^{me+1})(q^{me+2}-\epsilon^{me+2})\cdots(q^n-\epsilon^n)(q^{me-r-2}-\epsilon^{me-r-2})(q^{me-1}-\epsilon^{me-1})}{(q^{r+2}-\epsilon^{r+2})(q-\epsilon)(q-\epsilon)(q^2-1)\cdots(q^{r}-\epsilon^{r})}$} \\
\cline{2-4}
& $r\geq 2$, $m=1$, $e=r+1$ & {$(1, e-1, e-1)$} &{$\frac{(q^{e+2}-\epsilon^{e+2})(q^{e+3}-\epsilon^{e+3})\cdots(q^n-\epsilon^n)}{(q-\epsilon)(q^2-1)\cdots(q^{e-2}-\epsilon^{e-2})}$} \\
\cline{2-4}
& $r\geq 2$, $e= r+2$,  $p\mid(m-1)$& $(1^{r+2}, me-2)$ & $\prod_{i=1}^{e}\frac{(q^{n-i}-\epsilon^{n-i})}{(q^{i}-\epsilon^i)}$\\
%\cline{2-4}
%& $r\geq 2$, $e= r+1$ & $(1, r, me-1)$ & \\
\hline
\end{tabular}
\end{table}

\begin{table}\footnotesize
\caption{Some unipotent characters in $\irrp  {B_0(S)}$ for types $B_n(q)$, $C_n(q)$ with $n\geq 2$, $p\nmid q$, $(n,q)\neq (2, 2^{2a+1})$}\label{unipsBC}
\begin{tabular}{|c|c|c|c|}
\hline
 & Conditions on& \multirow{2}{*}{Symbol} & \multirow{2}{*}{$\chi(1)_{q'}$ (possibly excluding factors of $\frac{1}{2}$)}\\
 & $n=me+r$, $r<e$ & & \\
\hline \hline
  \multirow{3}{*}{$\chi_1$}
  %& $p\mid(q^e-1)$, $e\mid n$ & $0 \hbox{ } 1\choose n$ & $\frac{(q^{n-1}+1)(q^n+1)}{(q+1)}$ \\
 % \cline{2-4}
%  & $p\mid(q^e+1), e\mid n, \hbox{$m$ even}$    & {$1 \hbox{ } n \choose 0$} & {$\frac{(q^{n-1}-1)(q^n+1)}{(q-1)}$}\\
%\cline{2-4}
% & $p\mid(q^e+1), e\mid n, \hbox{$m$ odd}$ & $0 \hbox{ } n \choose 1$& $\frac{(q^{n-1}+1)(q^n-1)}{q-1}$ \\
% \cline{2-4}
& $p\mid(q^e-1)$  & $0 \hbox{ } r+1 \choose me$ & $\frac{(q^{2(me+1)}-1)\cdots (q^{2n}-1) (q^{me-r-1}+1)(q^{me}+1)(q^{r+1}-1)}{(q^2-1)(q^4-1)\cdots(q^{2(r+1)}-1)}$ \\
%&/// collapse with first row?/// & & \\
\cline{2-4}
 & $p\mid (q^e+1), m $ odd & $0\hbox{ } me \choose r+1$ &  $\frac{(q^{2(me+1)}-1)\cdots (q^{2n}-1) (q^{me-r-1}+1)(q^{me}-1)(q^{r+1}+1)}{(q^2-1)(q^4-1)\cdots(q^{2(r+1)}-1)}$ \\
%&/// collapse with third row?/// & & \\
\cline{2-4}
 & $p\mid (q^e+1), m $ even & $r+1 \hbox{ } me \choose 0$ &  $\frac{(q^{2(me+1)}-1)\cdots (q^{2n}-1) (q^{me-r-1}-1)(q^{me}+1)(q^{r+1}+1)}{(q^2-1)(q^4-1)\cdots(q^{2(r+1)}-1)}$ \\
%&/// collapse with second row?/// & & \\

 \hline
 
   \multirow{3}{*}{$\chi_2$}& $e\mid n$ & $0 \hbox{ } 1 \hbox{ } \cdots \hbox{ } n-1 \hbox { } n \choose 1 \hbox{ } \cdots \hbox{ } n-1 \hbox { } n$ & $1$ \\
  \cline{2-4}
   & $p\mid(q^e-1), e\nmid n, e\neq r+1$ or $p\nmid (m-1)$ & $r+1\hbox{ } me \choose 0$ & $\frac{(q^{2(me+1)}-1)\cdots (q^{2n}-1) (q^{me-r-1}-1)(q^{me}+1)(q^{r+1}+1)}{(q^2-1)(q^4-1)\cdots(q^{2(r+1)}-1)}$ \\
 \cline{2-4}
 & $p\mid(q^e-1), e\nmid n, e= r+1, p\mid(m-1)$ & $0 \hbox{ } me \choose e$ & $\frac{(q^{2(me+1)}-1)\cdots (q^{2n}-1) (q^{me-e}+1)(q^{me}-1)(q^{e}+1)}{(q^2-1)(q^4-1)\cdots(q^{2e}-1)}$\\
 \cline{2-4}
 % & $p\mid(q^e-1), e\nmid n, e= r+1, p\mid(m-1)$ & $0 \hbox{ } e \choose me$ & $\frac{(q^{2(me+1)}-1)\cdots (q^{2n}-1) (q^{me-e}+1)(q^{me}+1)(q^{e}-1)}{(q^2-1)(q^4-1)\cdots(q^{2e}-1)}$ \\
 & $p\mid (q^e+1), e\nmid n, m $ odd & $0\hbox{ } 1\hbox{ } me \choose 1\hbox{ } r+2$ &  $\frac{(q^{2(me+1)}-1)\cdots (q^{2n}-1) (q^{me-r-2}+1)(q^{2(me-1)}-1)(q^{me}-1)}{(q^2-1)^2(q^2-1)(q^4-1)\cdots(q^{2r}-1)(q^{r+2}-1)} $ \\
\cline{2-4}
 & $p\mid (q^e+1), e\nmid n, m $ even & $1\hbox{ } r+2\hbox{ } me \choose 0\hbox{ } 1$ &  $\frac{(q^{2(me+1)}-1)\cdots (q^{2n}-1) (q^{me-r-2}-1)(q^{2(me-1)}-1)(q^{me}+1)}{(q^2-1)^2(q^2-1)(q^4-1)\cdots(q^{2r}-1)(q^{r+2}-1)}$ \\
  \hline
\end{tabular}
\end{table}

\begin{table}\footnotesize
\caption{Some unipotent characters in $\irrp  {B_0(S)}$ for type $D_n(q)$ with $n\geq 5$, $p\nmid q$}\label{unipsD}
\begin{tabular}{|c|c|c|c|}
\hline
 & Conditions on& \multirow{2}{*}{Symbol} & \multirow{2}{*}{$\chi(1)_{q'}$ (possibly excluding factors of $\frac{1}{2}$)}\\
 & $n=me+r$, $r<e$ & & \\
\hline \hline
  \multirow{5}{*}{$\chi_1$}&  $e\nmid n$  & {$me \choose r$} & {$\frac{(q^{2(me+1)}-1)\cdots (q^{2(n-1)}-1) (q^n-1)(q^{me-r}+1)}{(q^2-1)(q^4-1)\cdots(q^{2r}-1)}$} \\

  \cline{2-4}
& $p\mid(q^e-1)$, $e\mid n$; or & \multirow{3}{*}{$0\hbox{ } 1 \hbox{ $\cdots$ } n-1  \choose 1\hbox{ $\cdots$ } n$} & \multirow{3}{*}{$1$} \\
& $p\mid(q^e+1)$, $e\mid n$, $m$ even; or &  & \\
& $e\mid (n-1)$ & &  \\
  \cline{2-4}
%& $p\mid(q^e+1)$, $1\neq e\mid n$, $m$ odd, $p\nmid(m-1)$ & $0\hbox{ } 1\hbox{ } e\hbox{ } n-e+1\choose \emptyset $ & $\frac{(q^{2(n-e+2)}-1)\cdots (q^{2(n-1)}-1) (q^{n}-1)(q^{n-2e+1}-1)(q^{n-e+1}-1)(q^{n-e}-1)}{(q+1)(q^2-1)(q^4-1)\cdots(q^{2(e-2)}-1)(q^{e-1}+1)(q^e+1)}$ \\
%\cline{2-4}
& $p\mid(q^e+1)$, $1\neq e\mid n$, $m$ odd & $1\hbox{ } n-e \choose 0 \hbox{ } e+1$   & $\frac{(q^{2(n-e+1)}-1)\cdots (q^{2(n-1)}-1) (q^{n}-1)(q^{n-2e-1}+1)(q^{n-e}+1)(q^{n-e-1}-1)}{(q-1)(q^2-1)(q^4-1)\cdots(q^{2(e-1)}-1)(q^{e}-1)(q^{e+1}+1)}$ \\
  \hline
  \hline
    \multirow{8}{*}{$\chi_2$}& $p\mid(q^e-1)$, $e\nmid n$ & $1\hbox{ } me \choose 0\hbox{ } r+1$ & $\frac{(q^{2(me+1)}-1)\cdots (q^{2(n-1)}-1) (q^n-1)(q^{me-r-1}+1)(q^{me}+1)(q^{me-1}-1)}{(q^2-1)(q^4-1)\cdots(q^{2(r-1)}-1)(q^{r}-1)(q^{r+1}+1)(q-1)}$ \\

 \cline{2-4}
&  $ e\mid n$, $e\neq 1$ or $p\nmid (n-1)$,   & \multirow{2}{*}{$1\hbox{ } n   \choose 0\hbox{  } 1$} &  \multirow{2}{*}{$\frac{(q^{2(n-1)}-1)}{(q^2-1)}$}\\
& with $p\mid(q^e-1)$ or $m$ even && \\
% & $p\mid(q^e+1)$, $ e\mid n$, $m$ even, $e\neq 1$ or $p\nmid (n-1)$ & & \\
  \cline{2-4}
& $p\mid (q^2-1)$, $p\mid(n-1)$ & {$n-1\choose 1$} & {$\frac{(q^n-1)(q^{n-2}+1)}{q^2-1}$}\\

\cline{2-4}
& $p\mid(q^e+1)$, $e\nmid n$, $m$ even & $r+1\hbox{ } me \choose 0\hbox{ } 1$ & $\frac{(q^{2(me+1)}-1)\cdots (q^{2(n-1)}-1) (q^n-1)(q^{me-r-1}-1)(q^{me}+1)(q^{me-1}+1)}{(q^2-1)(q^4-1)\cdots(q^{2(r-1)}-1)(q^{r}-1)(q^{r+1}-1)(q+1)}$ \\
\cline{2-4}
& $p\mid(q^e+1)$, $e\nmid n$, $m$ odd & $0\hbox{ } me \choose 1\hbox{ } r+1$ & $\frac{(q^{2(me+1)}-1)\cdots (q^{2(n-1)}-1) (q^n-1)(q^{me-r-1}+1)(q^{me}-1)(q^{me-1}+1)}{(q^2-1)(q^4-1)\cdots(q^{2(r-1)}-1)(q^{r}+1)(q^{r+1}-1)(q-1)}$ \\
  \cline{2-4}
& $p\mid(q^e+1)$, $e\mid n$, $m$ odd, $p\nmid(m-2)$ & $n-e\choose e$ & $\frac{(q^{2(me-e+1)}-1)\cdots (q^{2(me-1)}-1) (q^{me}-1)(q^{me-2e}+1)}{(q^2-1)(q^4-1)\cdots(q^{2e}-1)}$ \\
\cline{2-4}
& $p\mid(q^e+1)$, $e\mid n$, $m$ odd, $p\nmid(m-1)$ & $1\hbox{ } n-e+1\choose 0\hbox{ } e$ & $\frac{(q^{2(n-e+2)}-1)\cdots (q^{2(n-1)}-1) (q^{n}-1)(q^{n-2e+1}+1)(q^{n-e+1}+1)(q^{n-e}-1)}{(q-1)(q^2-1)(q^4-1)\cdots(q^{2(e-2)}-1)(q^{e-1}-1)(q^e+1)}$ \\
\hline
  \end{tabular}
  \end{table}
  
  \begin{table}\footnotesize
\caption{Some unipotent characters in $\irrp  {B_0(S)}$ for type $\tw{2}D_n(q)$ with $n\geq 4$, $p\nmid q$}\label{unips2D}
\begin{tabular}{|c|c|c|c|}
\hline
 & Conditions on& \multirow{2}{*}{Symbol} & \multirow{2}{*}{$\chi(1)_{q'}$ (possibly excluding factors of $\frac{1}{2}$)}\\
 & $n=me+r$, $r<e$ & & \\
\hline \hline
  \multirow{6}{*}{$\chi_1$}&  $e\nmid n$  & {$r\hbox{ } me \choose \emptyset$} & {$\frac{(q^{2(me+1)}-1)\cdots (q^{2(n-1)}-1) (q^n+1)(q^{me-r}-1)}{(q^2-1)(q^4-1)\cdots(q^{2r}-1)}$} \\
 \cline{2-4}
 & $p\mid(q^e+1)$, $1\neq e\mid n$, $m$ odd  & \multirow{3}{*}{$0\hbox{  }1\hbox{ } n   \choose  1$} &  \multirow{3}{*}{$\frac{(q^{2(n-1)}-1)}{(q^2-1)}$}\\
 & or &&\\
& $p\mid (q^2-1)$, $p\nmid (n-1)$ & & \\
\cline{2-4}
& $p\mid(q^2-1)$, $p\mid(n-1)$ & $1\hbox{ } n-1\choose \emptyset$ & $\frac{(q^{n}+1)(q^{n-2}-1)}{q^2-1}$\\
\cline{2-4}
& $p\mid(q^e+1)$, $1\neq e\mid n$, $m$ even & $0 \hbox{ }1\hbox{ }  n-e \choose e+1 $   & $\frac{(q^{2(n-e+1)}-1)\cdots (q^{2(n-1)}-1) (q^{n}+1)(q^{n-2e-1}+1)(q^{n-e}-1)(q^{n-e-1}-1)}{(q+1)(q^2-1)(q^4-1)\cdots(q^{2(e-1)}-1)(q^{e}-1)(q^{e+1}-1)}$ \\
\cline{2-4}
& $p\mid(q^e-1)$, $1\neq e\mid n$ & $1 \hbox{ }e+1\hbox{ }  n-e \choose 0 $   & $\frac{(q^{2(n-e+1)}-1)\cdots (q^{2(n-1)}-1) (q^{n}+1)(q^{n-2e-1}-1)(q^{n-e}+1)(q^{n-e-1}-1)}{(q-1)(q^2-1)(q^4-1)\cdots(q^{2(e-1)}-1)(q^{e}+1)(q^{e+1}-1)}$ \\
\hline
 \hline
     \multirow{8}{*}{$\chi_2$}& $p\mid(q^e-1)$, $e\nmid n$ & $0\hbox{ } 1\hbox{ } r+1 \choose  me$ & $\frac{(q^{2(me+1)}-1)\cdots (q^{2(n-1)}-1) (q^n+1)(q^{me-r-1}+1)(q^{me}+1)(q^{me-1}+1)}{(q^2-1)(q^4-1)\cdots(q^{2(r-1)}-1)(q^{r}+1)(q^{r+1}+1)(q+1)}$ \\
 \cline{2-4}
 & $p\mid(q^e+1)$, $e\nmid n$, $m$ even & $1\hbox{ } r+1\hbox{ } me \choose  0$ & $\frac{(q^{2(me+1)}-1)\cdots (q^{2(n-1)}-1) (q^n+1)(q^{me-r-1}-1)(q^{me}+1)(q^{me-1}-1)}{(q^2-1)(q^4-1)\cdots(q^{2(r-1)}-1)(q^{r}+1)(q^{r+1}-1)(q-1)}$ \\
 \cline{2-4}
 & $p\mid(q^e+1)$, $e\nmid n$, $m$ odd & $0\hbox{ } 1\hbox{ } me \choose  r+1$ & $\frac{(q^{2(me+1)}-1)\cdots (q^{2(n-1)}-1) (q^n+1)(q^{me-r-1}+1)(q^{me}-1)(q^{me-1}-1)}{(q^2-1)(q^4-1)\cdots(q^{2(r-1)}-1)(q^{r}-1)(q^{r+1}-1)(q+1)}$ \\
 \cline{2-4}
 & $p\mid(q^e+1)$, $e\mid n$, $m$ odd& \multirow{3}{*}{$0\hbox{ } 1 \hbox{ $\cdots$ } n  \choose 1\hbox{ $\cdots$ } n-1$} & \multirow{3}{*}{$1$} \\
 & or & & \\
 & $e\mid (n-1)$ & & \\
 \cline{2-4}
& $p\mid(q^e+1)$, $1\neq e\mid n$, $m$ even & $0 \hbox{ }e+1\hbox{ }  n-e \choose 1 $   & $\frac{(q^{2(n-e+1)}-1)\cdots (q^{2(n-1)}-1) (q^{n}+1)(q^{n-2e-1}-1)(q^{n-e}-1)(q^{n-e-1}+1)}{(q-1)(q^2-1)(q^4-1)\cdots(q^{2(e-1)}-1)(q^{e}-1)(q^{e+1}+1)}$ \\
\cline{2-4}
& $p\mid(q^e-1)$, $1\neq e\mid n$, $p\nmid(m-2)$ &  $e\hbox{ } n-e\choose \emptyset$ & $\frac{(q^{2(n-e+1)}-1)\cdots (q^{2(n-1)}-1) (q^{n}+1)(q^{n-2e}-1)}{(q^2-1)(q^4-1)\cdots(q^{2e}-1)}$ \\
\cline{2-4}
& $p\mid(q^e-1)$, $1\neq e\mid n$, $p\nmid(m-1)$ & $0\hbox{ } 1\hbox{ } n-e+1\choose  e$ & $\frac{(q^{2(n-e+2)}-1)\cdots (q^{2(n-1)}-1) (q^{n}+1)(q^{n-2e+1}+1)(q^{n-e+1}-1)(q^{n-e}-1)}{(q+1)(q^2-1)(q^4-1)\cdots(q^{2(e-2)}-1)(q^{e-1}-1)(q^e-1)}$ \\
\hline
\end{tabular}
\end{table}

\begin{Prop}\label{prop:crossexceptions}
Let $p>3$ be a prime and let $q$ be a power of a prime different than $p$.  Let $S$ be one of $PSL_2(q)$, $PSL_3^\epsilon(q)$ with $p\mid(q+\epsilon)$, $\tw{2}B_2(2^{2a+1})$ with $p\mid (2^{2a+1}-1)$, or $\tw{2}G_2(3^{2a+1})$ with $p\mid (3^{2a+1}-1)$.  Then there exist two non-trivial characters $\chi_1, \chi_2\in\irrp {B_0(S)}$ such that $\chi_2(1)\nmid\chi_1(1)$; $\chi_2$ is invariant under a Sylow $p$-subgroup of $\Aut(S)$; and for every $S\leq T\leq \Aut(S)$, $\chi_1$ extends to a character in the principal $p$-block of $T$.
\end{Prop}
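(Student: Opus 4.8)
The plan is to argue just as for \autoref{prop:definingexceptions}. Up to the principal-block requirement, the characters $\chi_1$ and $\chi_2$ will be those furnished by the corresponding exceptional, non-defining-characteristic case of \cite{GRS} (the analogue of \cite[Lemma~4.4]{GRS}), where they are produced with $\chi_2(1)\nmid\chi_1(1)$, with $\chi_2$ invariant under a Sylow $p$-subgroup of $\Aut(S)$, and with $\chi_1$ extending to $\Aut(S)$; the task is then only to check that both may be chosen inside the principal block. For every $S$ in the list $\Aut(S)/\wt S$ is generated by commuting field and graph automorphisms, hence abelian, so once $\chi_1\in\irrp{B_0(\wt S)}$ is known to come from a character of $\wt G$ trivial on $\Z(\wt G)$ that restricts irreducibly to $G$, the argument in the second paragraph of \autoref{prop:defininggen} produces, for each $S\le T\le\Aut(S)$, an extension of $\chi_1$ to $T$ lying in $B_0(T)$.

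I would take $\chi_1=\operatorname{St}_S$, the Steinberg character: it has $p'$-degree (as $p\nmid q$), it is unipotent, hence trivial on $\Z(\wt G)$ and equal to the irreducible restriction of $\operatorname{St}_{\wt G}$, and it is $\Aut(S)$-invariant and extends to $\Aut(S)$. The point to check is that $\operatorname{St}_S\in B_0(S)$. For $PSL_2(q)$ and $PSL_3^\epsilon(q)$ this is immediate from the $e$-core combinatorics classifying the blocks of $\GL_n^\epsilon(q)$: since $n\le 3$ here, the partitions $(1^n)$ and $(n)$ have the same $e$-core, so $\operatorname{St}_{\wt G}$ and the trivial character lie in the principal unipotent block of $\wt G$, and this descends to $G$ and then to $S$. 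For $\tw2B_2(q)$ and $\tw2G_2(q)$ with $p\mid(q-1)$, the principal block has cyclic defect group $C_{(q-1)_p}$ and the Steinberg character occurs on its Brauer tree, as recorded in \cite{burkhardtSz} and \cite{HissbrauerRee} respectively, so again $\operatorname{St}_S\in B_0(S)$.

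For $\chi_2$ I would use a non-trivial irreducible constituent of a Deligne--Lusztig character $R_T^G(\hat s)$ attached to a semisimple element $s\in\wt G^\ast$ of order $p$ in a suitable maximal torus (chosen regular, so that $\C_{\wt G^\ast}(s)=T$ and the constituent is the semisimple character of $\mathcal E(\wt G,s)$); such an $s$ exists in each of the cyclic-defect situations at hand. This character restricts irreducibly to $G$ (the centralizer in the dual group of $G$ of the image of $s$ being connected, since $o(s)=p$ is prime to $n\le 3$), and, being trivial on $\Z(\wt G)^F$ (a $p'$-group), it descends to a character of $S$. It lies in $B_0$: since $s$ is a $p$-element, $\mathcal E(\wt G,s)\subseteq B_0(\wt G)$ by the compatibility of Lusztig series with blocks (Broué--Michel), and it passes to $B_0(G)$ either because $p\nmid[\wt G:G]$, or -- in the one case $PSL_2(q)$ with $p\mid(q-1)$ -- because the restricted character is visibly one of the exceptional characters on the Brauer tree of $B_0(\SL_2(q))$. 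Its degree is $q+1$ for $PSL_2(q)$ with $p\mid(q-1)$, $q-1$ for $PSL_2(q)$ with $p\mid(q+1)$, $q^3-\epsilon$ for $PSL_3^\epsilon(q)$, $q^2+1$ for $\tw2B_2(q)$, and $q^3+1$ for $\tw2G_2(q)$; in each case this is prime to $q$ and greater than $1$, so it does not divide $\operatorname{St}_S(1)=|S|_\ell$. Finally, the inner-diagonal part of a Sylow $p$-subgroup of $\Aut(S)$ lies in $S$ (as $p\ge 5$ is prime to the order of the diagonal automorphisms), so Sylow $p$-invariance of $\chi_2$ reduces to invariance under the $p$-part of the field automorphisms; by the computation of the order of $q$ modulo $p$ recorded in the proof of \autoref{prop:crossgen}, the relevant power of the Frobenius acts on $s$ by an exponent $\equiv\pm1\pmod p$, hence fixes $s$ up to $\wt G^\ast$-conjugacy and therefore fixes $\chi_2$.

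The step I expect to be the main obstacle is verifying that $\chi_2$ really lies in $B_0(S)$: for each of the five families one must pin down the relevant $p'$-degree character precisely enough -- as a constituent of a Deligne--Lusztig character attached to a $p$-element -- to invoke the block-theoretic compatibility with Lusztig series, and then transport it along $\wt G\to G\to S$ (where the case $PSL_2(q)$, $p\mid(q-1)$ needs its own treatment since $p$ divides $[\wt G:G]$). For the Suzuki and Ree groups there is the additional complication that even the input $\operatorname{St}_S\in B_0(S)$ is not available from the $\GL_n^\epsilon$ block combinatorics and must be read off the Brauer trees in the literature for the primes $p\mid(q-1)$.
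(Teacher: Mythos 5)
Your proposal is correct and follows essentially the same route as the paper: $\chi_1$ is the Steinberg character (lying in $B_0$ because the unipotent characters of $p'$-degree form a single block here), and $\chi_2$ is the semisimple character attached to a semisimple $p$-element whose eigenvalues are chosen over the subfield $\F_{\ell^c}$ (with $a=p^bc$, $p\nmid c$) so that it is fixed by the $p$-part of the field automorphisms, with the Brou\'e--Michel partition placing it in a unipotent, hence the principal, block. The only cosmetic differences are that the paper pins down $\chi_2$ for the Suzuki and Ree groups by its CHEVIE label using \cite{burkhardtSz} and \cite{HissbrauerRee} rather than running the Deligne--Lusztig argument uniformly, and that your separate treatment of $PSL_2(q)$ with $p\mid(q-1)$ is unnecessary, since $B_0(\wt{G})$ covers only $B_0(G)$ regardless of whether $p$ divides $[\wt{G}:G]$.
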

\begin{proof}
First suppose $S$ is $PSL_2(q)$ or $PSL_3^\epsilon(q)$ with $p\mid(q+\epsilon)$.  In these cases the order of $q$ modulo $p$ is $1$ or $2$, and there is a unique unipotent block of maximal defect, so $\chi_1$ may still be taken to be the Steinberg character.   Let $\delta$ be an element of order $p$ in $\F_{q^2}$.  Write $q=\ell^a$, for some prime $\ell\neq p$, and write $a=p^bc$ with $p\nmid c$.  Then $p\mid \ell^{2c}-1$ since the order of $\ell$ modulo $p$ divides $2a$, and hence $2c$.  Then $\delta$ is either fixed or inverted by $F_{\ell}^c$, where $F_\ell$ is the generating field automorphism.  In particular, since the semisimple classes of $\wt{G}^\ast\cong GL_2(q)$, resp. $GL_3^\epsilon(q)$, are determined by their eigenvalues, this means that a semisimple element $s$ of $\wt{G}^\ast$ with eigenvalues $\{\delta, \delta^{-1}\}$, respectively $\{\delta, \delta^{-1}, 1\}$ is conjugate to its image under $F_{\ell}^c$.  Thus the corresponding semisimple character of $ \wt{G}$ is fixed by $F_{\ell}^c$, and hence a Sylow $p$-subgroup of $\Aut(S)$.  Further, $s$ satisfies (1)-(2) of \cite[Section 4.1.1]{GRS}, that is, $s$ is a member of $[\wt{G}^\ast, \wt{G}^\ast]\cong SL_2(q),$ resp. $SL_3^\epsilon(q)$, and is not conjugate to $sz$ for any $z\in \Z(\wt{G}^\ast)$, since $|\delta|\geq 5$.  Then this character is irreducible on $G$ and trivial on the center.  Further, it has degree $(q-\eta)$, where $\eta\in\{\pm1\}$ is such that $p\mid (q+\eta)$ for $PSL_2(q)$, and degree $q^3-\epsilon$ for $PSL_3^\epsilon(q)$ with $p\mid (q+\epsilon)$.  Since $s$ is a $p$-element, the character lies in a unipotent block, and hence $B_0(\wt{G})$, using \cite[Theorem 9.12]{CE04}.  Then as in the first paragraph of \autoref{prop:crossgen}, the corresponding character of $S$ lies in the principal block.  It also has non-trivial degree prime to $q$, which therefore does not divide the degree of the Steinberg character.  Hence this character satisfies our conditions.

Now let $S$ be $\tw{2}B_2(q^2)$ with $q^2=2^{2a+1}$ and  $p\mid(q^2-1)$ and write $2a+1=p^bc$ with $p\nmid c$.  Let $s$ be such that $\gamma^s$ has order $p\mid (2^c-1)$, where $\gamma$ has order $q^2-1$.  Then using \cite[Section 2]{burkhardtSz} and arguing as in the case above, we see that a slight modification of the characters used in \cite[Lemma 4.8]{GRS} works here: we may take $\chi_1$ to be the Steinberg character and $\chi_2$ to be the character $\chi_5(s)$ in CHEVIE notation. 

Finally, let $S$ be $\tw{2}G_2(q^2)$ with $q^2=3^{2a+1}$ and $p\mid (q^2-1)$.  Again write $2a+1=p^bc$ with $p\nmid c$.  Using \cite[Proposition 3.2]{HissbrauerRee}, there is a unique unipotent block of maximal defect, so we may take $\chi_1$ again to be the Steinberg character.  For $\chi_2$, it follows from \cite[Proposition 4.1]{HissbrauerRee} and arguments as above that we may take the character $\chi_{11}(s)$ in CHEVIE notation, where now $s$ is such that $\gamma^s$ has order $p\mid (3^c-1)$ and $\gamma$ has order $q^2-1$.
\end{proof}

\autoref{simple} now follows from Propositions \ref{prop:alt} and \ref{prop:sporadic} through \ref{prop:crossexceptions}, completing the proof of Theorem A.

\section*{Acknowledgments}
The second author is partially supported by the Spanish Ministerio de Ciencia
e Innovaci\'on PID2019-103854GB-I00 and FEDER funds. The third author is supported by the German Research Foundation (\mbox{SA 2864/1-1} and \mbox{SA 2864/3-1}).
The fourth author is partially supported by a grant from the National Science Foundation (Award No. DMS-1801156).  Part of this work was also completed while the second and fourth authors were in residence at the Mathematical Sciences Research Institute in Berkeley, CA, during Summer 2019 under grants from the National Security Agency (Award No.
H98230-19-1-0119), The Lyda Hill Foundation, The McGovern Foundation, and Microsoft Research.  The authors would also like to extend their gratitude to the anonymous referee for their careful reading and comments about the manuscript.

\end{document}